\DeclareMathOperator{\range}{range}
\DeclareMathOperator{\sign}{sign}
\DeclareMathOperator{\id}{id}
\newcommand{\R}{\mathbb{R}}\newcommand{\C}{\mathbb{C}}\newcommand{\N}{\mathbb{N}}\newcommand{\abs}[1]{\lvert #1\rvert}
\renewcommand{\l}{\ell}
\title{Shrinkage Rules for Variational Minimization Problems and Applications to Analytical Ultracentrifugation}
\abstract{Finding a sparse representation of a possibly noisy signal can be modeled as a variational minimization with $\ell_q$-sparsity constraints for $q$ less than one. Especially for real-time, on-line, or iterative applications, in which problems of this type have to be solved multiple times, one needs fast algorithms to compute these minimizers. 

Identifying the exact minimizers is computationally expensive. We consider minimization up to a constant factor to circumvent this limitation. We verify that $q$-dependent modifications of shrinkage rules provide closed formulas for such minimizers. Therefore, their computation is extremely fast. We also introduce a new shrinkage rule which is adapted to $q$. 

To support the theoretical results, the proposed method is applied to Landweber iteration with shrinkage used at each iteration step. This approach is utilized to solve the ill-posed problem of analytic ultracentrifugation, a method to determine the size distribution of macromolecules. For relatively pure solutes, our proposed scheme leads to sparser solutions with sharper peaks, higher resolution, and smaller residuals than standard regularization for this problem.}
\keywords{shrinkage, variational optimization, sparsity, frames, Fredholm integral equations}
\begin{document}

\section{Introduction}

Decomposing signals into simple building blocks and reconstructing from shrinked coefficients are used in signal representation and processing, e.g., wavelet shrinkage is applied to noise and clutter reduction in speckled SAR images, %improving the performance of detection systems
cf.~\cite{coastlines}. 
Statistical approaches and Bayesian objectives for noise removal make use of various shrinkage strategies, cf.~\cite{donoho_1,Elad,Jeff%,Raphan,SendurSeles
,surerisk}. %for statistical approaches involving Bayesian objectives, prior-learning, maximum likelihood, and aposteriori estimators. 
Variational models as in \cite{variational_denoising} justify shrinkage by smoothness estimates of the unperturbed signal. Other shrinkage rules are derived from a diffusion approach in \cite{WeickertSteidl}. 

Signal approximation with sparsity constraints leads to variational minimization problems, and the denoising approach in \cite{variational_denoising} is a particular case. The expression to be minimized is a sum of an approximation error and a penalty term which involves weighted $\ell_q$-constraints, see Section \ref{subsection Variational Minimization Problems}. In \cite{daub_inverse_defrise}, iteratively shrinking coefficients of an orthonormal basis expansion provides a sequence converging towards the minimizer. The method covers the convex case $q\in [1,2]$, but sparse signal representation, coding, signal analysis, and the treatment of operator equations require the consideration of redundant basis-like systems and the nonconvex case $q\in[0,1)$ as well, see for instance \cite{BalanRick,Chartrand,DahlkeMassimoRaasch,Dahlke:2007ab,ehler4,FornasierRauhut,Stevenson:2008aa} and references therein. By using hard-shrinkage, the algorithm in  
 \cite{daub_inverse_defrise} converges towards a local minimum for $q=0$, cf.~\cite{Blumensath1}. Under the restricted isometry property (RIP) \cite{CanRomTao,DonElaTem}, the iteration converges towards the exact minimum, cf.~\cite{Blumensath:2009aa}. However, RIP does not hold in many situations and therefore the local minimum could be far off the global minimum. The approach does not cover $q\in(0,1)$, and, for applications where computation time is crucial, a faster algorithm is desirable.

In the present paper, we obtain complementary results for $q\in[0,1)$ in terms of minimization up to a constant factor. In fact, we verify that such a minimization can be derived from $q$-dependent modifications of shrinkage rules. This means we have a closed formula for these minimizers, which allows for a fast computation. We also introduce new shrinkage rules which are adapted to $q$. 
We then propose a Landweber iteration with these new shrinkage strategies applied in each step to treat sparsity constraints for $q\in (0,1)$, cf.~\cite{Blumensath1,daub_inverse_defrise} for soft- and hard-shrinkage. This approach is then applied to the ill-posed problem of sedimentation velocity analytical ultracentrifugation, a method to determine the size distribution of interacting macromolecules \cite{Cox:1969aa,Schuck:2000aa}. Its physical model leads to a Fredholm integral equation that needs to be regularized. For highly pure monomers, the solution is expected to be highly sparse with few sharp peaks. Our numerical experiments suggest that our proposed iterative shrinkage scheme leads to sharper peaks, fewer nonzero entries, and smaller residuals. Thus, it provides a useful add-on to standard analytical ultracentrifugation analysis, cf.~\cite{Brown:2008aa,Schuck:2000aa}.   

% Finally, our approach makes the setting $q\in[0,1)$ more feasible than until now.

The outline is as follows: In Section \ref{section:frames problems}, we present the variational problems under consideration and we recall the concept of frames. We introduce shrinkage rules in Section \ref{shrinkage rules}. The main results about minimization up to a constant factor are presented in Section \ref{section main results}, and in Section \ref{section:sparse approximation} we apply the results to sparse signal representation. We introduce a new family of shrinkage rules in Section \ref{section interpolation}. The modified Landweber iteration is explicitly introduced in Section \ref{section:Landweber}, where we also present numerical results about sedimentation velocity analytical ultracentrifugation. Conclusions are given in Section \ref{section:conclusions}.

\section{Variational Problems and Frames}\label{section:frames problems}

\subsection{Variational Minimization Problems}\label{subsection Variational Minimization Problems}
Let $L$ be a bounded operator between two Hilbert spaces $\mathcal{H}$ and $\mathcal{H}'$, and let $\{\tilde{f}_n\}_{n\in\mathcal{N}}$ be a countable collection in $\mathcal{H}$. Given $h\in\mathcal{H}'$, we consider the minimization problem
\begin{equation}\label{problem 1}
\min_{g\in\mathcal{H}}\big( \|h-Lg\|^2_{\mathcal{H}'} + \sum_{n\in\mathcal{N}} \alpha_n |\langle g,\tilde{f}_n\rangle|^q\big),
\end{equation}
where $q\in (0,2]$, $(\alpha_n)_{n\in\mathcal{N}}$ is a sequence of nonnegative numbers, and $\langle\cdot,\cdot\rangle$ denotes the inner product on $\mathcal{H}$. This makes also sense for $q=0$ with the convention $0^0=0$, and the penalty term then counts the nonzero entries of $(\langle g,\tilde{f}_n\rangle)_{n\in\mathcal{N}}$ weighted by $(\alpha_n)_{n\in\mathcal{N}}$. For $\mathcal{H}=\mathcal{H}'$ and $L=\id_\mathcal{H}$, problem \eqref{problem 1} is relevant in wavelet based signal denoising. There, $\{\tilde{f}_n\}_{n\in\mathcal{N}}$ is a wavelet system, and the sparsity constraint on the right hand side of \eqref{problem 1} is related to the Besov regularity of the signal to be recovered, see \cite{variational_denoising} for details. Our approach is neither restricted to $L$ being the identity nor must $L$ be injective. However, we assume that it 
% $L:\mathcal{H}\mapsto\mathcal{H}'$ 
has a  bounded pseudo inverse, i.e.~there is a bounded operator $L^\#:\mathcal{H}'\mapsto\mathcal{H}$ such that $LL^\#L=L$. Thus, we first consider well-posed problems and are therefore more restrictive than in \cite{daub_inverse_defrise}. Nevertheless, we address ill-posed problems in Section \ref{section:Landweber} by extending the iterative shrinkage procedure introduced in \cite{daub_inverse_defrise}. 

The sequence $(\alpha_n)_{n\in\mathcal{N}}$ is a collection of variable parameters which must be fitted to $h$ and $L$. If all components of $\alpha_n=\alpha$ are identical, then  
\begin{equation}\label{eq:curve}
\alpha \mapsto \big(\|h-Lg^{\alpha}\|^2_{\mathcal{H}'} , \sum_{n\in\mathcal{N}} |\langle g^\alpha,\tilde{f}_n\rangle|^q  \big)
\end{equation}
is considered as a curve in $\R^2$, where $g^\alpha$ is a minimizer of \eqref{problem 1}, and one finally chooses $\alpha$ according to a point of maximal curvature, see \cite{Hansen} and \cite{curvature} for the \emph{L-curve} and \emph{H-curve criterion}, respectively. It requires to compute minimizers $g^\alpha$ for many different values of $\alpha$, and $g^\alpha$ must be efficiently computable. This is another motivation for avoiding costly iterative minimization schemes beside real-time and on-line applications.

Handling nonstationary noise requires that $(\alpha_n)_{n\in\mathcal{N}}$ depends on $n$, but it is often still reasonable to assume that there are positive constants $a$ and $b$ such that 
\begin{equation}\label{eq:boundedness condition on delta}
a\leq \alpha_n\leq b, \text{ for all $n\in\mathcal{N}$.}
\end{equation}

\subsection{Bi-frames}\label{subsection Wavelet Bi-Frames}
The singular value decomposition of $L$ is considered in \cite{Lorenz:2008ab} to address $q\in[0,1)$. The system  $\{\tilde{f}_n\}$ in \eqref{problem 1} is supposed to be an orthonormal basis for $\mathcal{H}$  which diagonalizes $L$. 
% such that $L$ is diagonalized wrt.~ $\{\tilde{f}_n\}$ and to an orthonormal basis for $\mathcal{H}'$ 
However, diagonalizing $L$ can be extremely difficult in practical applications. We will consider redundant basis-like systems, and $L$ is not required to be diagonalized: a countable collection $\{f_n\}_{n\in\mathcal{N}}$ in $\mathcal{H}$ is a {\em frame} if there are two positive constants $A$, $B$ such that   
\begin{equation}\label{frame definition equation}
A\|g\|^2_\mathcal{H} \leq \sum_{n\in\mathcal{N}} |\langle g,f_n\rangle |^2\leq B\|g
\|^2_\mathcal{H}, \text{ for all $g\in \mathcal{H}$.}
\end{equation}
If $\{f_n\}_{n\in\mathcal{N}}$ is a frame, then its \emph{synthesis operator} 
\begin{equation}\label{eq:synthesis operator}
F:\ell_2(\mathcal{\mathcal{N}})\rightarrow \mathcal{H}, \quad (c_n)_{n\in \mathcal{N}}\mapsto \sum_{n\in \mathcal{N}}c_n f_n,
\end{equation}
is onto. Each $g\in \mathcal{H}$ then has a series expansion, but we still have to find its coefficients. The synthesis operator's adjoint %adjoint, \emph{analysis operator}
\begin{equation}\label{def analysis operator in frame theory}
F^*:\mathcal{H}\rightarrow \l_2(\mathcal{N}), \quad g\mapsto (\langle g , f_n\rangle)_{n\in \mathcal{N}}
\end{equation}
is called \emph{analysis operator},
$S=FF^*$ %:\mathcal{H}\rightarrow\mathcal{H},\quad f\mapsto \sum_{n\in\mathcal{N}}\left\langle f,f_n\right\rangle f_n
%\end{equation*}
is invertible, and $\{S^{-1}f_n\}_{n\in\mathcal{N}}$ is called {\em canonical dual frame} and expands %$g$ as%providing the expansion
\begin{equation*}%\label{frame expansion canonical}
g=\sum_{n\in \mathcal{N}}\langle g, S^{-1}f_n\rangle f_n, \text{ for all $g\in \mathcal{H}$.}
\end{equation*}
%\emph{Parseval frames}, i.e., $A=B=1$ and so $S$ is the identity, have been applied to image denoising in \cite{konstantinidis}. 
The inversion of $S$ can be difficult, and, 
since $F$ need not be injective,
%due to the redundancy of the frame, 
there could be `better' coefficients than $\langle g, S^{-1}f_n\rangle$. This motivates the following: two frames $\{f_n\}_{n\in\mathcal{N}}$ and $\{\tilde{f}_n\}_{n\in\mathcal{N}}$ are called a {\em pair of dual frames} (or a \emph{bi-frame}) if  
\begin{equation}\label{frame expansion in hilbert space}
g=\sum_{n\in \mathcal{N}}\langle g, \tilde{f}_n\rangle f_n, \text{ for all $g\in\mathcal{H}$,}
\end{equation}
i.e., $F\widetilde{F}^*=\id_\mathcal{H}$, where $\widetilde{F}^*$ is the dual frame's analysis operator. For instance, the canonical dual of a wavelet frame may not have the wavelet structure as well, but it can possibly replaced by an alternative dual wavelet frame, cf.~\cite{Daubechies:2000aa,ehler2,Ehler:2010ae,ehlerHan,Han:2009aa,Ron:1997aa} and references therein.

Throughout the paper, we suppose that $\{f_n\}_{n\in\mathcal{N}}$ and $\{\tilde{f}_n\}_{n\in\mathcal{N}}$ are a {\bf bi-frame for $\mathcal{H}$}.

\section{Shrinkage Rules}\label{shrinkage rules}
To solve \eqref{problem 1}, shrinkage plays a crucial role. Following ideas in \cite{tao1}, we call a function $\varrho:\C\times \R_{\geq 0}\rightarrow \C$ a {\em shrinkage rule} if there are constants $C_1, C_2, \rho, D>0$ such that both conditions
\begin{align}
 \left|x-\varrho(x,\alpha)\right|&\leq C_1 \min(|x|,\alpha), \text{ for all $\alpha\geq  0$, $x\in\C$,}\label{eq estimate threshold rule eins} \\
\left|\varrho(x,\alpha)\right|&\leq C_2 |x|\big|\frac{x}{\alpha}\big|^{\rho}, \text{ for all $\alpha> 0$, $|x|\leq D\alpha$,}\label{eq estimate threshold rule zwei}
\end{align}
are satisfied. While \eqref{eq estimate threshold rule eins} forces $\varrho(x,\alpha)$ to be close to $x$ for small $\alpha$, condition \eqref{eq estimate threshold rule zwei} means that $\varrho(x,\alpha)$ has sufficient decay as $x$ goes to zero.
%For instance, Tao and Vidakovic have verified in \cite{tao1} that a shrinked wavelet expansion of a function $f$ converges almost everywhere towards $f$ as $\alpha$ goes to zero.
A shrinkage rule $\varrho$ is called a \emph{thresholding rule} if there is a constant $C_3>0$ such that $|x|\leq C_3\alpha$ implies $\varrho(x,\alpha)=0$. A thresholding rule allows for $\rho=\infty$ in \eqref{eq estimate threshold rule zwei}, where we use $a^\infty=0$ if $0\leq a<1$. We will recall a few common shrinkage rules and we restrict us to $x\in\R$, see also Figure \ref{fig shrinkage rules}: \emph{Soft-shrinkage} is given by $\varrho_s(x,\alpha)=(x-\sign(x)\alpha){\bf 1}_{\{|x|>\alpha\}}$. Contrary to soft- and \emph{hard-shrinkage} 
$\varrho_h(x,\alpha)=x{\bf 1}_{\{\abs{x}>\alpha\}}$, the \emph{nonnegative garotte-shrinkage} rule $\varrho_g(x,\alpha)=(x-\frac{\alpha^2}{x}){\bf 1}_{\{\abs{x}>\alpha \}}$ is continuous and large coefficients are not changed much. It has been successfully applied to image denoising in \cite{gao}. Similar properties has \emph{hyperbolic-shrinkage} $\varrho^{hy}(x,\alpha)=\sign(x)\sqrt{x^2-\alpha^2} {\bf 1}_{\{\abs{x}>\alpha\}}(x)$, cf.~\cite{tao1}. 

The \emph{n-degree garotte-shrinkage} rule is given by $\varrho^n(x,\alpha)=\frac{x^{2n+1}}{x^{2n}+\alpha^{2n}}$, see \cite{tao1}. For $k\in\N$, the twice differentiable rule 
\begin{equation}\label{shrinkage 2k}
\varrho_k(x,\alpha)=\begin{cases}
\tfrac{x^{2k+1}}{(2k+1)\alpha^{2k}},& |x|\leq \alpha\\
x-\sign(x)(\alpha-\tfrac{\alpha}{2k+1}),& |x|> \alpha
\end{cases}
\end{equation}
is considered in \cite{surerisk}. Both rules are shrinkage rules with $\rho=2k=2n$ and $C_2=1$. The rules $\varrho(x,\alpha)=x\big(1-\sqrt{\frac{\alpha^2}{\alpha^2+2x^2}} \big)$ and $\varrho(x,\alpha)\approx x\exp(-0.2\frac{\alpha^8}{x^8})$ are based on diffusion, see \cite{WeickertSteidl}. One verifies that both are shrinkage rules with $\rho=1$, and we refer to them as diffusion 1 and 2 in Figure \ref{fig shrinkage rules}.

Bruce and Gao proposed \emph{firm-shrinkage} 
\begin{equation*}
\varrho_f(x,\alpha_1,\alpha_2)=x{\bf 1}_{\{|x|>\alpha_2\}}+\sign(x)\frac{\alpha_2(|x|-\alpha_1)}{\alpha_2-\alpha_1}{\bf 1}_{\{\alpha_1\leq |x|\leq \alpha_2\}}
\end{equation*}
in \cite{BruceGao}. For fixed $\alpha_1$, the mapping $(x,\alpha)\mapsto \varrho_f(x,\alpha_1,\alpha)$ is a thresholding rule. 

\newlength{\breit}
\setlength{\breit}{.2\textwidth}
\begin{figure}
\centering
\subfigure[hard]{
\includegraphics[width=\breit]{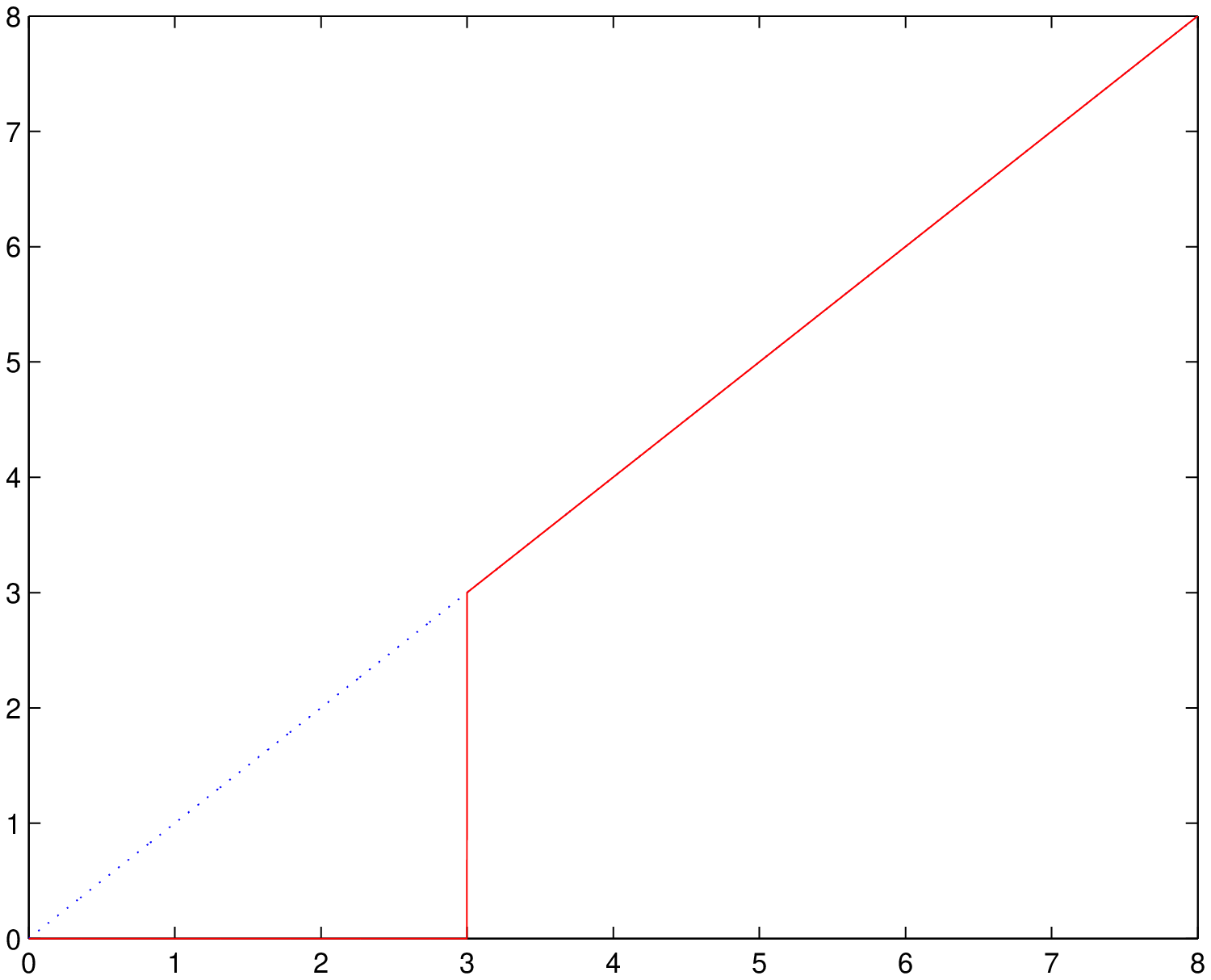}}
\subfigure[soft]{
\includegraphics[width=\breit]{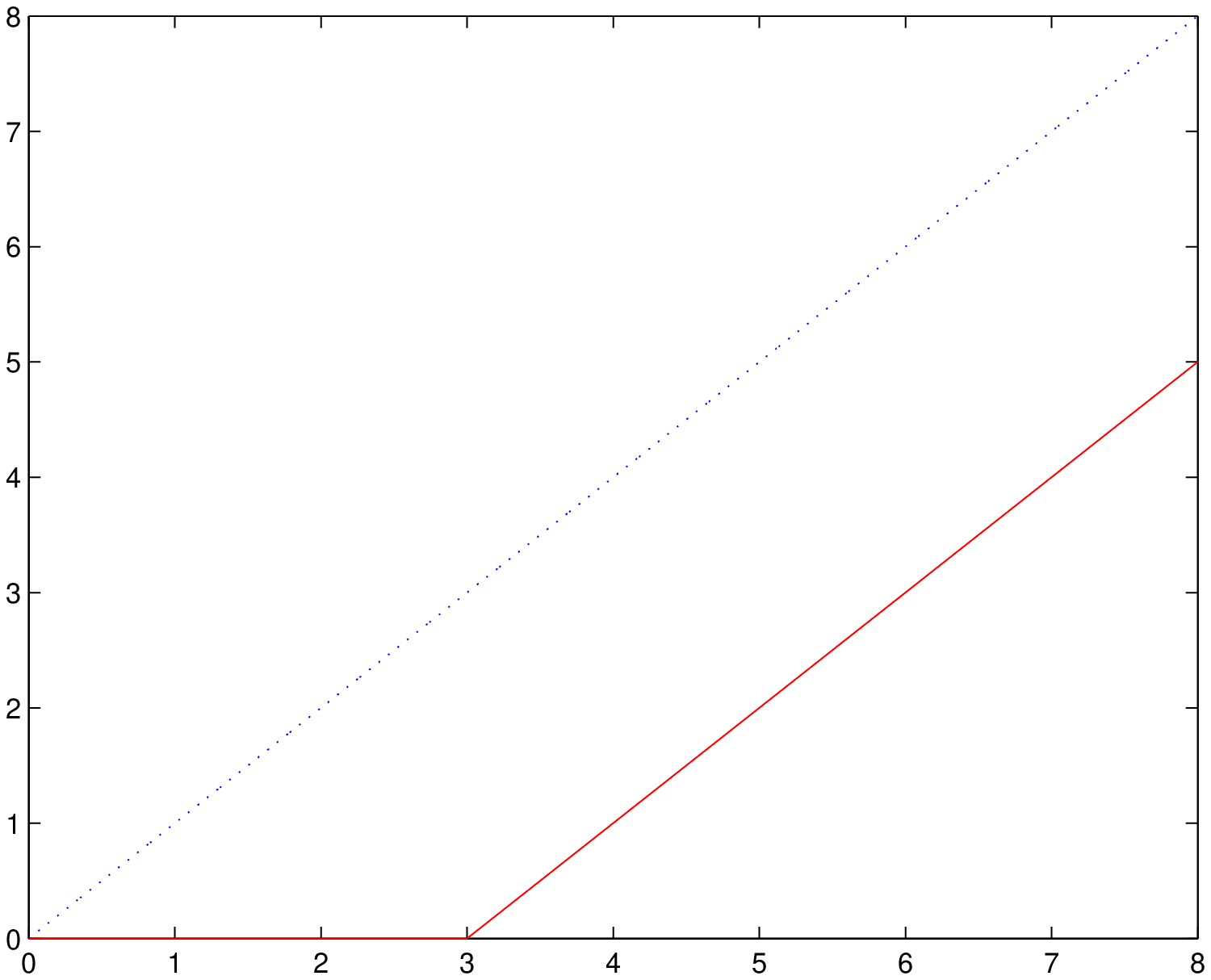}}
\subfigure[nonnegative garotte]{
\includegraphics[width=\breit]{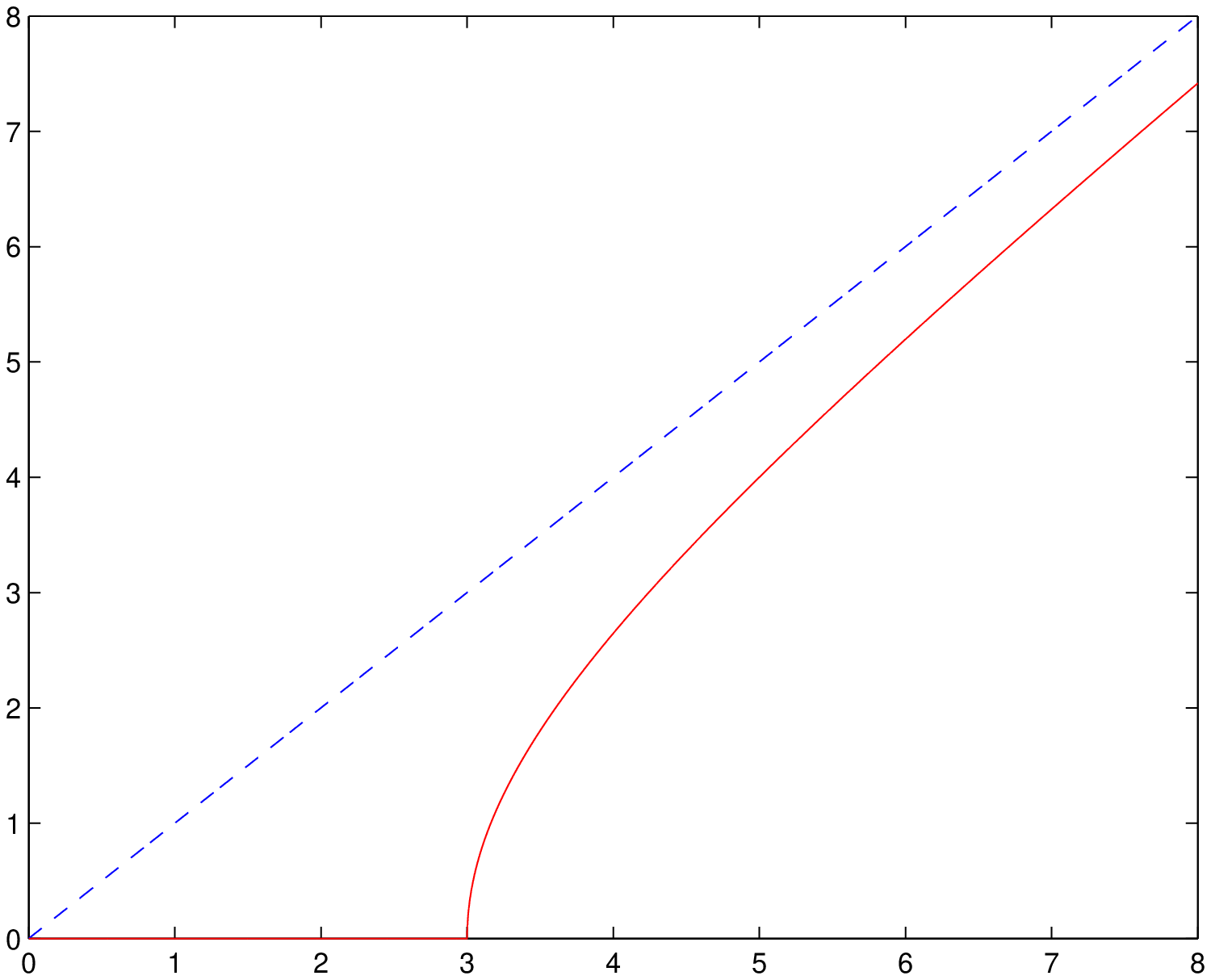}}
\subfigure[hyperbolic]{
\includegraphics[width=\breit]{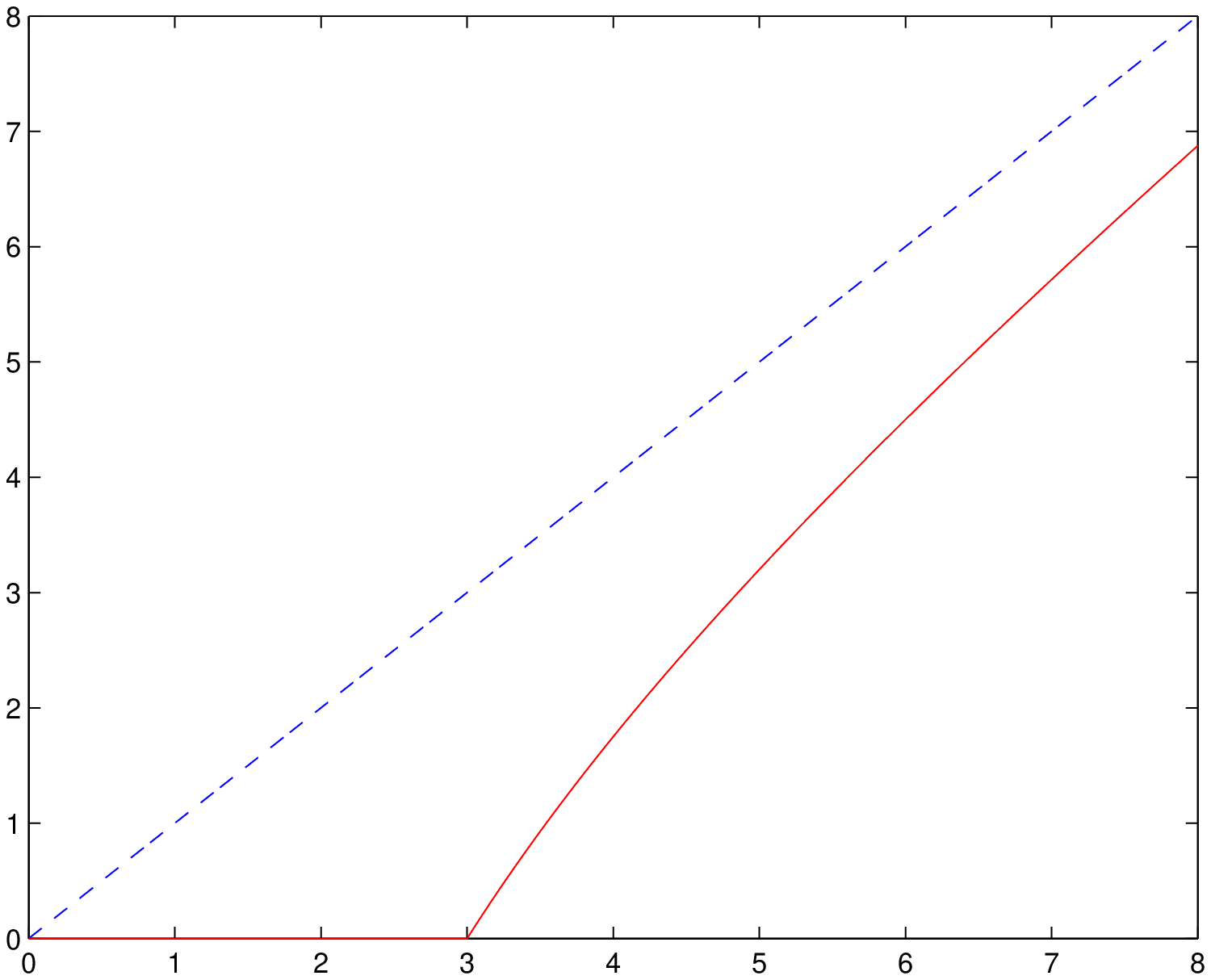}}

\subfigure[1-degree garotte]{
\includegraphics[width=\breit]{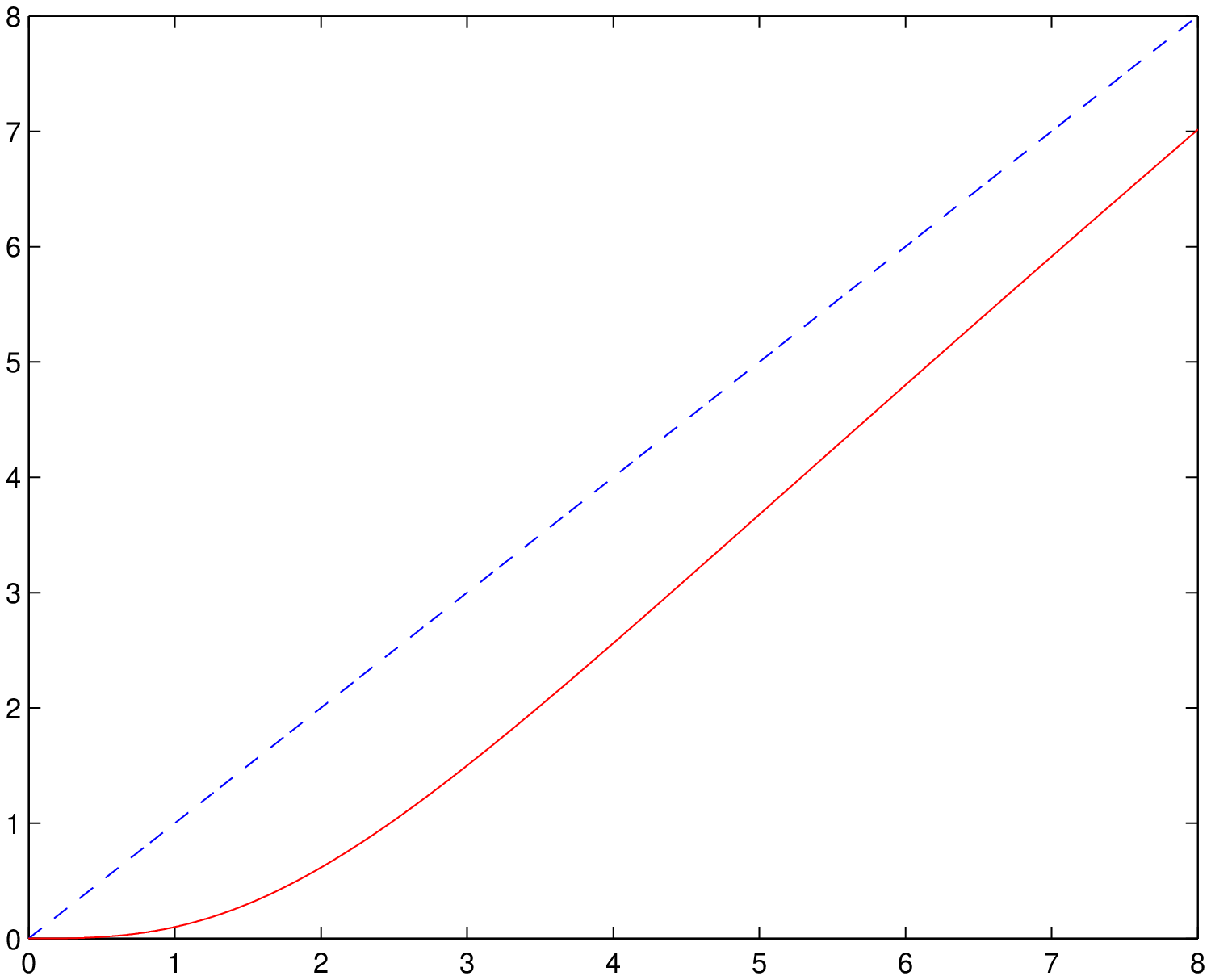}}
\subfigure[2-degree garotte]{
\includegraphics[width=\breit]{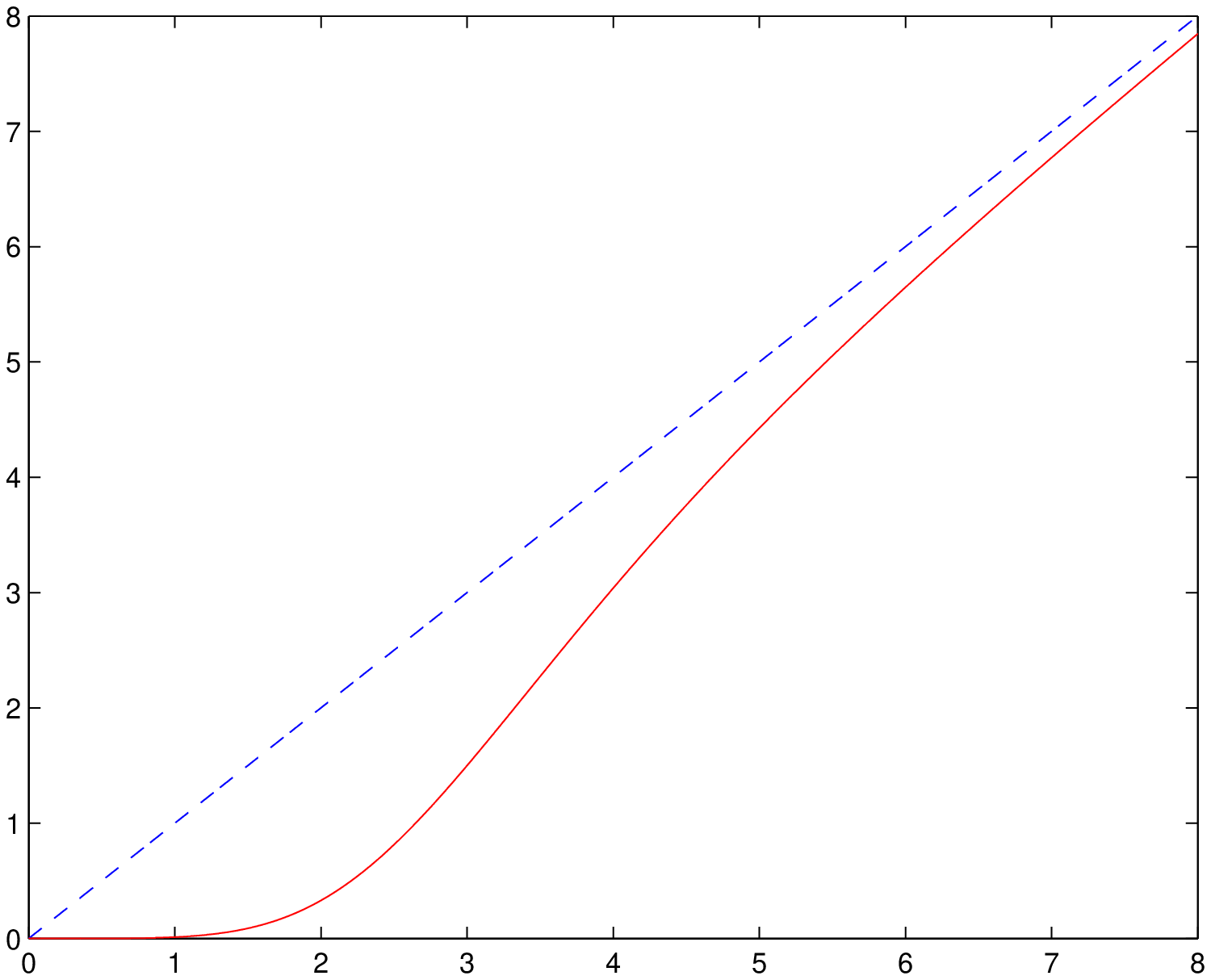}}
\subfigure[$k=1$-shrinkage]{
\includegraphics[width=\breit]{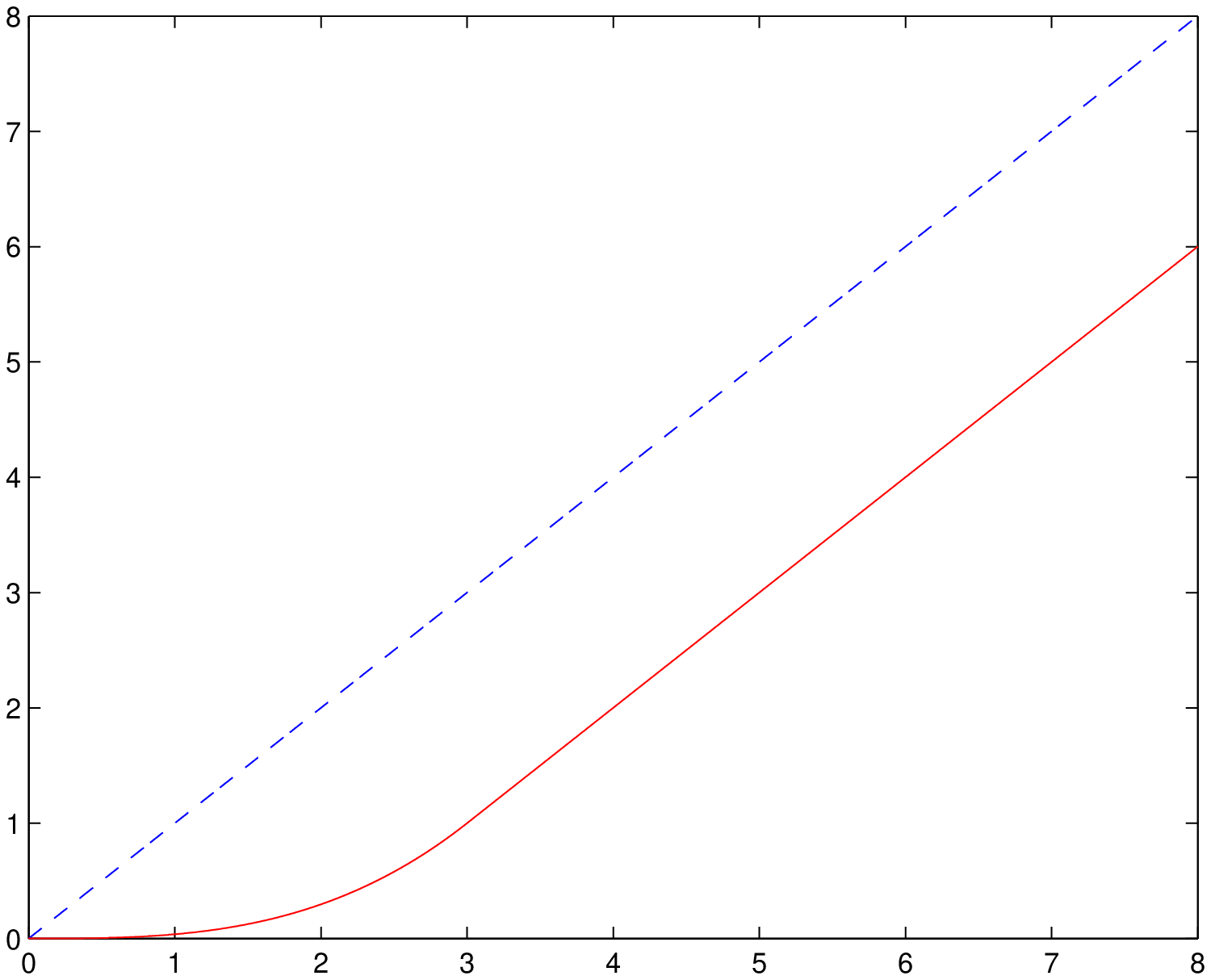}}
\subfigure[$k=2$-shrinkage]{
\includegraphics[width=\breit]{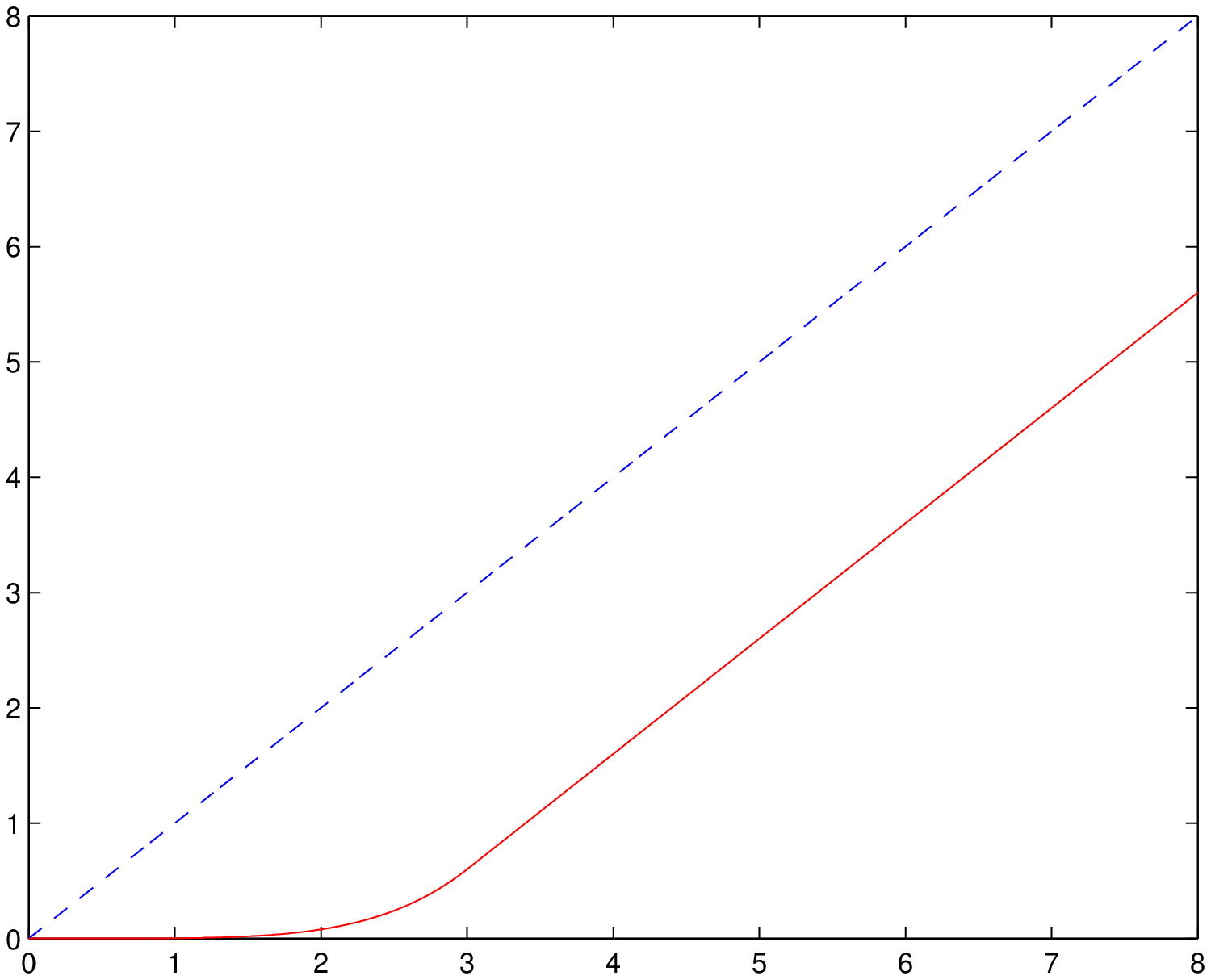}}

\subfigure[diffusion 1]{
\includegraphics[width=\breit]{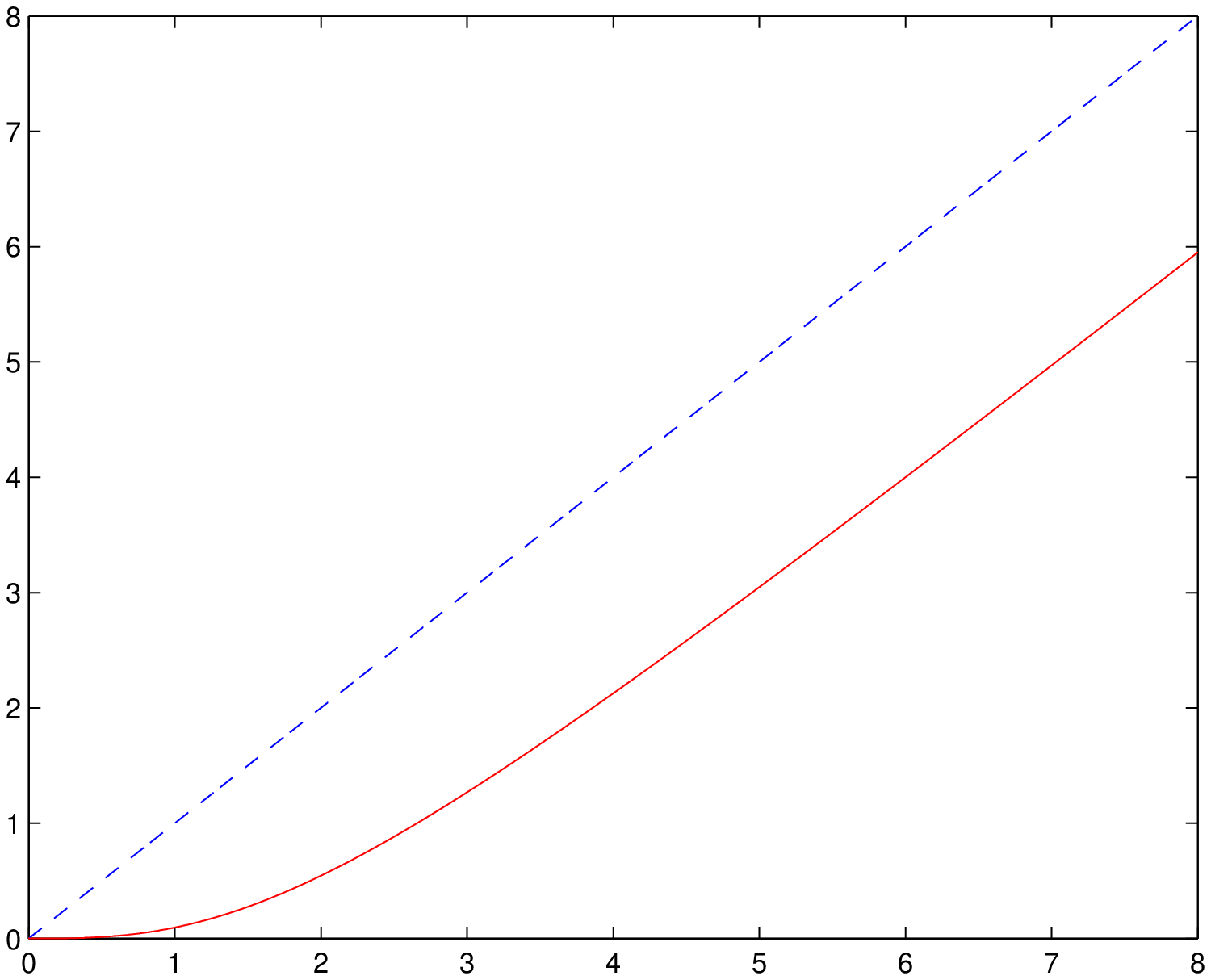}}
\subfigure[diffusion 2]{
\includegraphics[width=\breit]{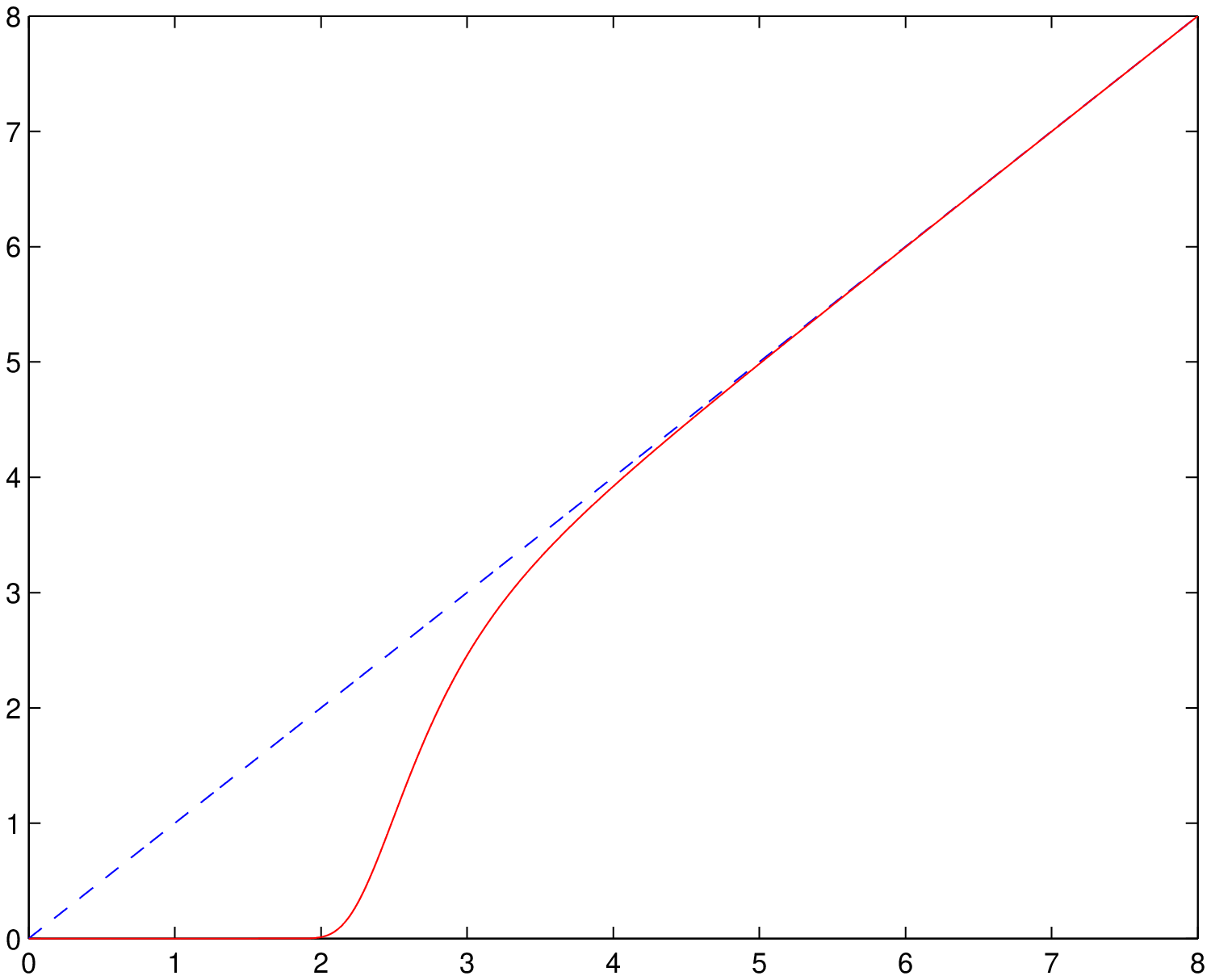}}
\subfigure[firm, $\alpha_1=1$]{
\includegraphics[width=\breit]{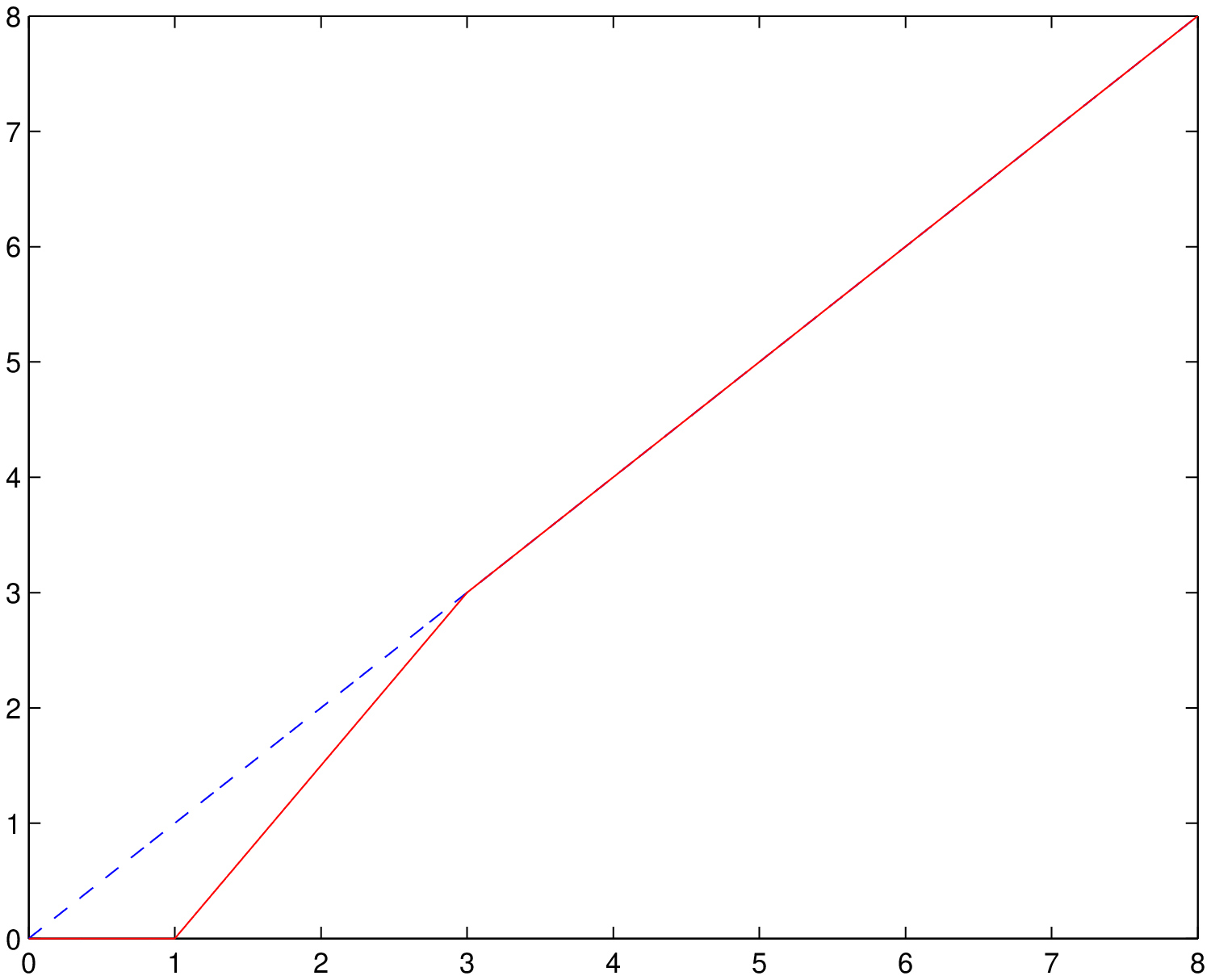}}
\subfigure[firm, $\alpha_1=2$]{
\includegraphics[width=\breit]{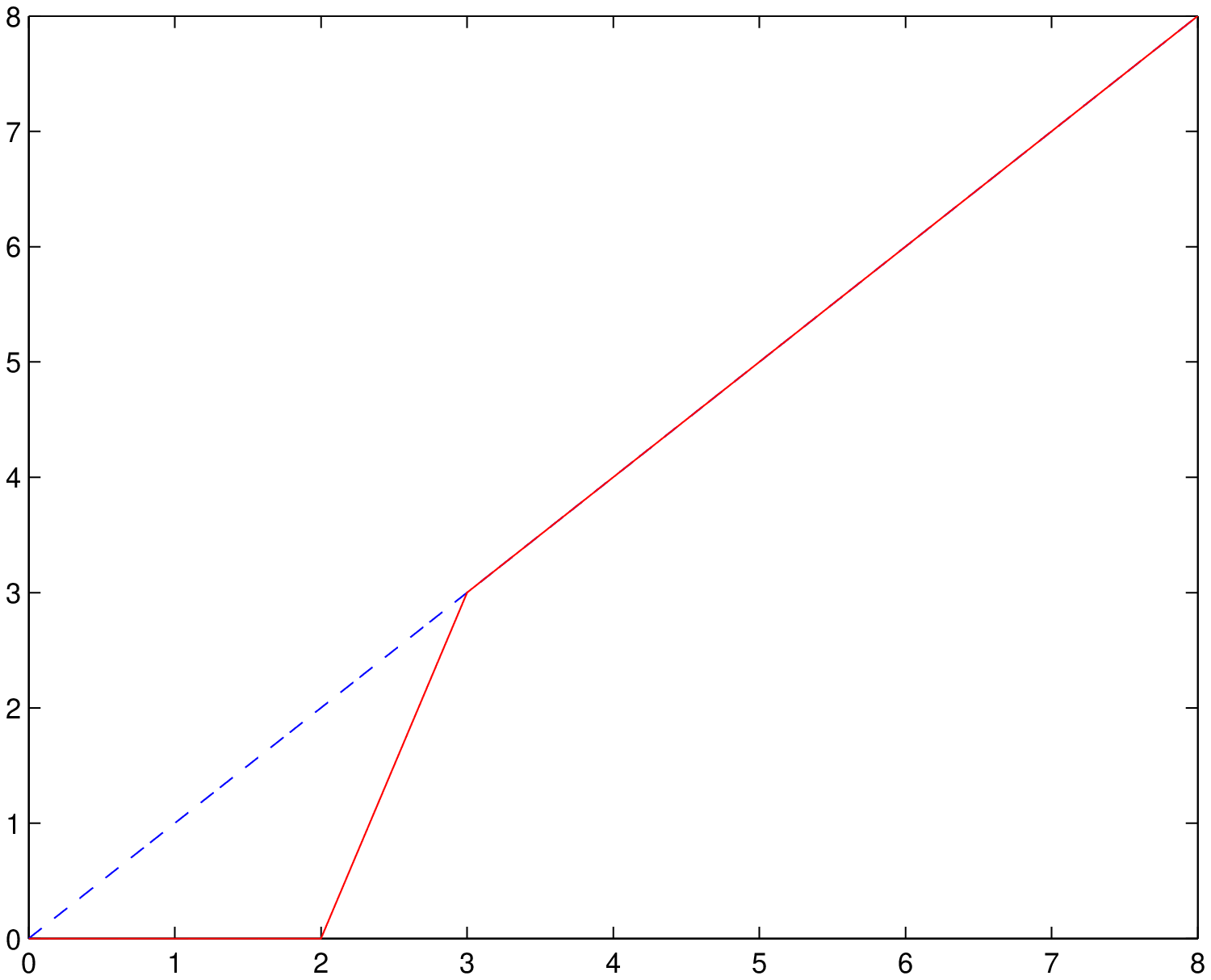}}
\caption{Shrinkage rules $\varrho(x,\alpha)$, for $\alpha=3$. (a) is not continuous. (b)-(d),(k), and (l) are continuous but not differentiable. (e)-(j) are smooth}\label{fig shrinkage rules}
\end{figure}

\section{Main Results}\label{section main results}
For $q\in[0,2]$, let $\ell^{(\alpha_n)}_q(\mathcal{N})$ denote the weighted $\ell_q(\mathcal{N})$-space, i.e., the space of complex-valued sequences $(\omega_n)_{n\in\mathcal{N}}$ such that 
%\begin{equation*}
$\|\omega\|^q_{\ell_q^{(\alpha_n)}}:=\sum_{n\in\mathcal{N}}\alpha_n |\omega_n|^q
%\end{equation*}
$ is finite. One observes that $\sum_{n\in\mathcal{N}} \alpha_n |\langle g,\tilde{f}_n\rangle|^q=\|\widetilde{F}^*g\|^q_{\ell_q^{(\alpha_n)}}$, and to shorten notation, we denote 
\begin{equation*}
\mathcal{J}_q(h,g)= \|h-Lg\|^2_{\mathcal{H}'} + \|\widetilde{F}^* g\|_{\ell_q^{(\alpha_n)}}^q.
\end{equation*}
The idea for the following main result is to replace a shrinkage rule $\varrho(x,\alpha)$ by its $q$-dependent expression $\varrho(x,\alpha|x|^{q-1})$. Due to \eqref{eq estimate threshold rule eins}, it vanishes as $x\neq 0$ goes to $0$, and we apply $\varrho(x,\alpha|x|^{q-1})=0$ for $x=0$. If $\rho=\infty$, we use $\frac{1}{\rho}=0$. Since $\varrho_g(x,\alpha)=\varrho_s(x,\alpha^2|x|^{-1})$, the nonnegative garotte is $q$-dependent soft-shrinkage for $q=0$ and $\alpha$ replaced by $\alpha^2$. It turns out that $q$-dependent shrinkage expressions provide minimizers of \eqref{problem 1} up to a constant factor:
\begin{theorem}\label{theorem 1}
Let $\varrho$ be a shrinkage rule with $\rho\in[\frac{1}{2},\infty]$. Suppose that $\widetilde{F}^*L^\#LF$ is bounded on $\ell_{1/\rho}^{(\alpha_n)}(\mathcal{N})$. Let $q=\frac{1}{\rho}$, then there is a constant $C>0$ such that for all $h\in\range(L)$, and for all $g\in\mathcal{H}$
\begin{equation*}
\mathcal{J}_q(h,\hat{g})\leq C\mathcal{J}_q(h,g), 
\end{equation*}
where $\hat{g}=L^\#LF\varrho(v_n,\alpha_n|v_n|^{q-1})_{n\in\mathcal{N}}$ with $v=\widetilde{F}^*L^\#h$. 

If $\widetilde{F}^*F$ is also bounded on $\ell_{1/\rho}^{(\alpha_n)}(\mathcal{N})$, one can choose $\hat{g}=F\varrho(v_n,\alpha_n|v_n|^{q-1})_{n\in\mathcal{N}}$. If \eqref{eq:boundedness condition on delta} holds, then the statements extend to all $q\in[\frac{1}{\rho},2]$, and $C$ is independent of $q$.
%If $L$ is an isomorphism and the bi-frame is biorthogonal, then $C$ neither depends on $q\in[\frac{1}{\rho},1]$ nor on $(\alpha_n)_{n\in\mathcal{N}}$.
\end{theorem}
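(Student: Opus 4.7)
The plan is to use the pseudo-inverse and bi-frame identities to reduce the minimization to a componentwise comparison in the dual-frame coefficients of $L^\# h$, and then to verify that the shrinkage rule realizes such a componentwise minimization up to a fixed constant. Since $h \in \range(L)$ and $LL^\# L = L$, one has $LL^\# h = h$; setting $u := L^\# h$ and $v := \widetilde F^* u$, the bi-frame identity $F\widetilde F^* = \id_{\mathcal H}$ yields $u = Fv$ and hence $h = LFv$. Put $w_n := \varrho(v_n, \alpha_n|v_n|^{q-1})$ and $\hat g := L^\# L F w$, so that $L\hat g = LFw$ and $\widetilde F^* \hat g = (\widetilde F^* L^\# L F)\,w$.

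I will next bound both summands of $\mathcal{J}_q(h,\hat g)$ by a sum of pointwise quantities in $v$. The fidelity term satisfies $\|h - L\hat g\|^2 \le \|LF\|^2\,\|v-w\|^2_{\ell_2}$, and \eqref{eq estimate threshold rule eins} bounds $|v_n-w_n|^2$ by $C_1^2\min(|v_n|^2,\alpha_n^2|v_n|^{2q-2})$. For the penalty term, the hypothesis makes $\|\widetilde F^*\hat g\|^q_{\ell_q^{(\alpha_n)}}$ comparable to $\|w\|^q_{\ell_q^{(\alpha_n)}}$. A case split at $|v_n|^{2-q} = D\alpha_n$ then applies \eqref{eq estimate threshold rule zwei} with $\rho = 1/q$ on the small-$v_n$ regime to give $\alpha_n|w_n|^q \le C_2^q D\,\alpha_n|v_n|^q$, and \eqref{eq estimate threshold rule eins} on the large-$v_n$ regime to give $\alpha_n|w_n|^q \le (1+C_1)^q\alpha_n|v_n|^q$. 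The per-index contribution $|v_n-w_n|^2 + \alpha_n|w_n|^q$ is thereby controlled by a fixed multiple of $\min(|v_n|^2,\alpha_n|v_n|^q)$, which in turn is comparable (by splitting over $|\omega|\le |v_n|/2$ versus its complement) to the pointwise infimum $m_n := \inf_\omega(|v_n-\omega|^2 + \alpha_n|\omega|^q)$.

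For an arbitrary competitor $g \in \mathcal{H}$, set $\omega := \widetilde F^* g$, so $g = F\omega$ and $\mathcal{J}_q(h,g) = \|LF(v-\omega)\|^2 + \|\omega\|^q_{\ell_q^{(\alpha_n)}}$. A lower frame-type bound converting $\|LF(v-\omega)\|^2$ into a multiple of $\|v-\omega\|^2_{\ell_2}$ on the subspace $\range(\widetilde F^*)$ (available from the bounded pseudo-inverse of $L$ combined with the frame lower bound for $F$) produces $\mathcal{J}_q(h,g) \ge c\sum_n \bigl(|v_n-\omega_n|^2 + \alpha_n|\omega_n|^q\bigr) \ge c\sum_n m_n$. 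Chaining with the previous step yields $\mathcal{J}_q(h,\hat g) \le C\sum_n m_n \le C'\mathcal{J}_q(h,g)$. The variant $\hat g = Fw$ is proved identically, invoking the boundedness of $\widetilde F^* F$ on $\ell_{1/\rho}^{(\alpha_n)}$ in the penalty step.

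For $q \in [1/\rho, 2]$ under \eqref{eq:boundedness condition on delta}, the pointwise bounds above depend on $q$ only through factors that are absorbed by $a \le \alpha_n \le b$, so $C$ comes out independent of $q$. The principal obstacle is the coercivity $\|LF(v-\omega)\|^2 \ge c'\|v-\omega\|^2_{\ell_2}$ used in the previous paragraph: in the non-diagonal setting this requires jointly the bounded pseudo-inverse of $L$ and the frame lower bound for $F$, and is precisely the step that forces the restriction to well-posed problems made earlier in Section~\ref{subsection Variational Minimization Problems}.
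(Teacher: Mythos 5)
Your reduction to the decoupled problem \eqref{problem equiv} and your componentwise analysis of the shrinkage rule (comparing $|v_n-w_n|^2+\alpha_n|w_n|^q$ with $\min(|v_n|^2,\alpha_n|v_n|^q)$, which equals the pointwise infimum $m_n$ up to a factor $4$) are in line with the paper's Propositions \ref{proposition equiv 1} and \ref{proposition solving equiv}. The genuine gap is in your lower bound for an arbitrary competitor $g$. You choose the test sequence $\omega=\widetilde{F}^*g$ and then need the coercivity $\|LF(v-\omega)\|^2\geq c'\|v-\omega\|^2_{\ell_2}$. Since $F\widetilde{F}^*=\id_{\mathcal H}$ gives $LF(v-\omega)=L(L^\# h-g)$, this amounts to $\|L(L^\# h-g)\|\geq c''\|L^\# h-g\|$, which fails whenever $L$ has a nontrivial kernel --- and the paper explicitly allows non-injective $L$. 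Concretely, for $g=L^\# h+tk$ with $0\neq k\in\ker L$ one has $\mathcal{J}_q(h,g)=O(t^q)$ while $\mathcal{I}_q(v,\widetilde{F}^*g)$ grows like $t^2$, so no inequality of the form $\mathcal{J}_q(h,g)\geq c\,\mathcal{I}_q(v,\widetilde{F}^*g)\geq c\sum_n m_n$ can hold. The bounded pseudo-inverse only yields $\|L^\# Lu\|\lesssim\|Lu\|$, i.e., it controls the component $L^\# Lg$, not $g$ itself, so it cannot supply the coercivity you invoke.

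The repair is the paper's choice of competitor: test the decoupled infimum against $\omega=\widetilde{F}^*L^\# Lg$ rather than $\widetilde{F}^*g$. Then $v-\omega=\widetilde{F}^*L^\#(h-Lg)$, so the fidelity term needs only the upper bound $\|\widetilde{F}^*L^\#\|<\infty$, and the penalty term satisfies $\|\widetilde{F}^*L^\# Lg\|^q_{\ell_q^{(\alpha_n)}}=\|\widetilde{F}^*L^\# LF\widetilde{F}^*g\|^q_{\ell_q^{(\alpha_n)}}\lesssim\|\widetilde{F}^*g\|^q_{\ell_q^{(\alpha_n)}}$ by the assumed boundedness of $\widetilde{F}^*L^\# LF$ on $\ell_q^{(\alpha_n)}$. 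This yields $\sum_n m_n\leq\mathcal{I}_q(v,\widetilde{F}^*L^\# Lg)\lesssim\mathcal{J}_q(h,g)$ using upper bounds only; the entire argument runs on boundedness, which is exactly what the ``minimization up to a constant factor'' framework buys. Your closing claim that the coercivity is what forces the well-posedness assumption is therefore also a misreading: the bounded pseudo-inverse enters solely through the boundedness of $\widetilde{F}^*L^\#$ and of $\widetilde{F}^*L^\# LF$. The remainder of your argument (the case split at $|v_n|^{2-q}=D\alpha_n$, the identification of $m_n$, and the interpolation step for $q\in[\frac{1}{\rho},2]$ under \eqref{eq:boundedness condition on delta}) is sound.
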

\begin{remark}
If the bi-frame is biorthogonal and $F\ell_{1/\rho}^{(\alpha_n)}\subset \range(L^\#L)$, then $\widetilde{F}^*L^\#LF=\id_{\ell_{1/\rho}^{(\alpha_n)}}$, because $\widetilde{F}^*F$ is the identity and $L^\#L$ is the identity on its range. The boundedness condition is then trivially satisfied as it is for finite $\mathcal{N}$. 
\end{remark}
To prove Theorem \ref{theorem 1}, we consider a decoupled minimization problem: given $v\in\ell_2(\mathcal{N})$, we try to minimize 
\begin{equation}\label{problem equiv}
\mathcal{I}_q(v,\omega)=\big(\|v-\omega\|^2_{\ell_2} + \sum_{n\in\mathcal{N}} \alpha_n |\omega_n|^q\big)
\end{equation}
over $\omega\in\ell_2(\mathcal{N})$. It turns out that minimizing \eqref{problem 1} and \eqref{problem equiv} up to a constant factor are equivalent:
\begin{proposition}\label{proposition equiv 1}
Given $q\in[0,2]$, suppose that $\widetilde{F}^*L^\#LF$ is bounded on $\ell_q^{(\alpha_n)}(\mathcal{N})$. For $h\in\range(L)$, let $v=\widetilde{F}^*L^\#h$. If $\hat{\omega}$ minimizes \eqref{problem equiv} up to a constant factor, then $\hat{g}=L^\#LF\hat{\omega}$ minimizes \eqref{problem 1} up to a constant factor. 

If $\widetilde{F}^*F$ is bounded on $\ell_{q}^{(\alpha_n)}(\mathcal{N})$, one may also choose $\hat{g}=F\hat{\omega}$. The reverse implication holds for $\hat{\omega}=\widetilde{F}^*L^\#L\hat{g}$ and $\hat{\omega}=\widetilde{F}^*\hat{g}$, respectively. 
\end{proposition}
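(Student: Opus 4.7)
The plan is to derive two-sided comparisons between $\mathcal{J}_q(h,\cdot)$ and $\mathcal{I}_q(v,\cdot)$ via the transfer maps appearing in the statement. The workhorse identities are the bi-frame relation $F\widetilde{F}^*=\id_{\mathcal{H}}$ and the pseudo-inverse relation $LL^\#L=L$, together with their consequences $LL^\#h=h$ and $L^\#h=Fv$ whenever $h\in\range(L)$, and the factorization $\widetilde{F}^*L^\#Lg=(\widetilde{F}^*L^\#LF)\widetilde{F}^*g$, which lets the standing boundedness hypothesis act directly on $\widetilde{F}^*g$.

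For the forward implication I would proceed as follows. Given an arbitrary $g\in\mathcal{H}$, set $\omega_g:=\widetilde{F}^*L^\#Lg$. Then $v-\omega_g=\widetilde{F}^*L^\#(h-Lg)$ controls the data part of $\mathcal{I}_q(v,\omega_g)$ by $\|\widetilde{F}^*L^\#\|^2\|h-Lg\|_{\mathcal{H}'}^2$, while the factorization of $\omega_g$ combined with the $\ell_q^{(\alpha_n)}$-boundedness of $\widetilde{F}^*L^\#LF$ controls the penalty by a constant multiple of $\|\widetilde{F}^*g\|_{\ell_q^{(\alpha_n)}}^q$. This yields $\mathcal{I}_q(v,\omega_g)\leq K_1\mathcal{J}_q(h,g)$; invoking the hypothesis on $\hat\omega$ gives $\mathcal{I}_q(v,\hat\omega)\leq C\cdot K_1\,\mathcal{J}_q(h,g)$. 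To close the loop I bound $\mathcal{J}_q(h,\hat g)$ by $\mathcal{I}_q(v,\hat\omega)$: for $\hat g=L^\#LF\hat\omega$ the identity $h-L\hat g=LL^\#h-LL^\#LF\hat\omega=LF(v-\hat\omega)$ handles the data term, and $\widetilde{F}^*\hat g=(\widetilde{F}^*L^\#LF)\hat\omega$ handles the penalty via the same bounded operator. The variant $\hat g=F\hat\omega$ uses the same data identity and needs the extra $\widetilde{F}^*F$-boundedness only to bound the penalty $\|\widetilde{F}^*F\hat\omega\|_{\ell_q^{(\alpha_n)}}^q$.

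The reverse implication is formally symmetric: for arbitrary $\omega\in\ell_2(\mathcal N)$ I would feed $g_\omega:=L^\#LF\omega$ (respectively $F\omega$) into the forward-direction estimate to get $\mathcal{J}_q(h,g_\omega)\leq K_1\,\mathcal{I}_q(v,\omega)$, use the hypothesis on $\hat g$ to chain in $\mathcal{J}_q(h,\hat g)\leq C\,\mathcal{J}_q(h,g_\omega)$, and then bound $\mathcal{I}_q(v,\hat\omega)$ by $\mathcal{J}_q(h,\hat g)$. For $\hat\omega=\widetilde{F}^*L^\#L\hat g$ this last step is immediate: $v-\hat\omega=\widetilde{F}^*L^\#(h-L\hat g)$ gives the data bound and the factorization $\hat\omega=(\widetilde{F}^*L^\#LF)\widetilde{F}^*\hat g$ gives the penalty bound, paralleling the forward computation.

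The main obstacle I foresee is the last sub-step in the variant $\hat\omega=\widetilde{F}^*\hat g$, where the data estimate $\|v-\hat\omega\|_{\ell_2}\leq C\|h-L\hat g\|_{\mathcal{H}'}$ is not immediate because $\widetilde{F}^*$ does not conjugate through $L$ (the operator $L$ need not be injective). I plan to treat it through the splitting $L^\#h-\hat g=L^\#(h-L\hat g)+(L^\#L-\id)\hat g$, which isolates the clean data contribution and reduces the residual to controlling $\widetilde{F}^*(L^\#L-\id)\hat g=(\widetilde{F}^*L^\#LF-\widetilde{F}^*F)\widetilde{F}^*\hat g$; the standing boundedness of $\widetilde{F}^*L^\#LF$ and the added $\widetilde{F}^*F$-boundedness should then absorb this residual into the penalty term of $\mathcal{J}_q(h,\hat g)$, completing the argument.
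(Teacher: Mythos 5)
Your treatment of the forward implication, of the variant $\hat{g}=F\hat{\omega}$, and of the reverse implication for $\hat{\omega}=\widetilde{F}^*L^\#L\hat{g}$ is correct and is essentially the paper's own argument: the same identities $h=LFv$, $L\hat{g}=LF\hat{\omega}$, the factorization $\widetilde{F}^*L^\#Lg=(\widetilde{F}^*L^\#LF)\widetilde{F}^*g$, and the boundedness of $LF$, $\widetilde{F}^*L^\#$ and $\widetilde{F}^*L^\#LF$ (resp.\ $\widetilde{F}^*F$) are deployed in the same way; you merely traverse the chain of inequalities in the opposite order. The paper writes out only the forward direction and declares the remaining cases analogous.

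The gap sits exactly where you anticipated it, and your proposed repair does not close it. For $\hat{\omega}=\widetilde{F}^*\hat{g}$ you must bound the data term $\|v-\widetilde{F}^*\hat{g}\|_{\ell_2}^2$ by a constant multiple of $\mathcal{J}_q(h,\hat{g})$. Your splitting correctly isolates the residual $\widetilde{F}^*(L^\#L-\id)\hat{g}=(\widetilde{F}^*L^\#LF-\widetilde{F}^*F)\widetilde{F}^*\hat{g}$, but this residual enters the data term \emph{squared}, i.e.\ it is $2$-homogeneous in $\hat{g}$, whereas the penalty $\|\widetilde{F}^*\hat{g}\|_{\ell_q^{(\alpha_n)}}^q$ into which you want to absorb it is only $q$-homogeneous. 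For $q<2$ there is no uniform constant with $\|T\omega\|_{\ell_2}^2\leq C\|\omega\|_{\ell_q^{(\alpha_n)}}^q$ for a nonzero bounded $T$: replace $\omega$ by $t\omega$ and let $t\to\infty$.

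Moreover, the failure is not an artifact of your method; when $\ker L\neq\{0\}$ this reverse implication cannot hold with constants uniform in $h$ (which is the reading needed for Theorem \ref{theorem 1}). Take $\mathcal{H}=\ell_2(\N)$ with the canonical basis as bi-frame, so $F=\widetilde{F}^*=\id$, let $Lg=g_1$, $L^\#c=ce_1$, and $h=R$. Then $\hat{g}=Re_1+cRe_2$ gives $\mathcal{J}_q(h,\hat{g})=(\alpha_1+\alpha_2c^q)R^q$ while $\inf_g\mathcal{J}_q(h,g)\asymp\alpha_1R^q$ for large $R$, so $\hat{g}$ minimizes \eqref{problem 1} up to a factor independent of $R$; yet $\mathcal{I}_q(v,\widetilde{F}^*\hat{g})\geq c^2R^2$ whereas $\inf_\omega\mathcal{I}_q(v,\omega)\leq\alpha_1R^q$, and the ratio diverges as $R\to\infty$ for $q<2$. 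The residual vanishes, and your argument closes, precisely when $\hat{g}\in\range(L^\#L)$ (e.g.\ $\hat{g}=L^\#LF\hat{\omega}$), so the safe form of the last reverse implication is the one with $\hat{\omega}=\widetilde{F}^*L^\#L\hat{g}$, which you already proved. The paper's ``the reverse implications follow in a similar way'' glosses over this same point, so the defect is in the statement's final clause rather than only in your argument; it does not affect the forward directions used to prove Theorem \ref{theorem 1}.
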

Given a parameter set $\Gamma$ and two expressions $(a_\tau)_{\tau\in \Gamma}$ and $(b_\tau)_{\tau\in \Gamma}$ such that there is a constant $C>0$ with $a_\tau\leq Cb_\tau$ for all $\tau\in \Gamma$, we write $a_\tau\lesssim b_\tau$ in the following proof.
\begin{proof}[Proof of Proposition \ref{proposition equiv 1}]
Let $\hat{\omega}$ minimize \eqref{problem equiv} up to a constant factor, i.e., $\mathcal{I}_q(v,\hat{\omega})\lesssim \mathcal{I}_q(v,\omega)$, for all $\omega\in\ell_2(\mathcal{N})$. Since $F\widetilde{F}^*=\id_\mathcal{H}$ and since $LL^\#L=L$ yields $LL^\#h=h$, we have $h=LFv$. Applying $LL^\#L=L$ implies $L\hat{g}=LF\hat{\omega}$, which leads to
\begin{equation*}
\mathcal{J}_q(h,\hat{g}) = \|LFv-LF\hat{\omega}\|^2_{\mathcal{H}'} + \| \widetilde{F}^*L^\#LF\hat{\omega} \|^q_{\ell_q^{(\alpha_n)}}.
\end{equation*}
Since $LF:\ell_2\mapsto \mathcal{H}'$ is bounded and due to the boundedness of $\widetilde{F}^*L^\#LF$ on $\ell_q^{(\alpha_n)}$, this implies $\mathcal{J}_q(h,\hat{g})\lesssim \mathcal{I}_q(v,\hat{\omega})$. Since $\hat{\omega}$ minimizes \eqref{problem equiv} up to a constant factor, we have $\mathcal{J}_q(h,\hat{g})\lesssim \mathcal{I}_q(v,\widetilde{F}^*L^\#Lg)$, for all $g\in\mathcal{H}$. By applying that $\widetilde{F}^*L^\#$ is bounded %from $\mathcal{H}'$ to $\ell_2(\mathcal{N})$ 
and that $F\widetilde{F}^*=\id_\mathcal{H}$, we obtain, for all $g\in\mathcal{H}$,
\begin{align*}
\mathcal{J}_q(h,\hat{g}) &\lesssim \|\widetilde{F}^*L^\#h-\widetilde{F}^*L^\#Lg\|^2_{\ell_2} + \| \widetilde{F}^*L^\#LF\widetilde{F}^*g \|^q_{\ell_q^{(\alpha_n)}} \\
& \lesssim \| h-Lg\|^2_{\ell_2} + \| \widetilde{F}^*L^\#LF\widetilde{F}^*g \|^q_{\ell_q^{(\alpha_n)}} \lesssim \mathcal{J}_q(h,g),
\end{align*}
where we have used that $\widetilde{F}^*L^\#LF$ is bounded on $\ell_q^{(\alpha_n)}$. 

Analogous arguments can be applied to the case $\hat{g}=F\hat{\omega}$, and the reverse implications follow in a similar way.
\end{proof}
Next, we obtain a solution of the discrete problem \eqref{problem equiv}.
\begin{proposition}\label{proposition solving equiv}
Let $\varrho$ be a shrinkage rule with $\rho\in[\frac{1}{2},\infty]$. Then there is a constant $C>0$ such that for all $q\in[\frac{1}{\rho},2]$, for all $v\in\ell_2(\mathcal{N})$, and for all $\omega\in\ell_2(\mathcal{N})$,
\begin{equation*}
\mathcal{I}_q(v,\hat{\omega})\leq C \mathcal{I}_q(v,\omega),
\end{equation*}
where $\hat{\omega}=\varrho(v_n,\alpha_n|v_n|^{q-1})_{n\in\mathcal{N}}$.
\end{proposition}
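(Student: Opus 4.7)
The plan is to reduce the minimization up to a constant factor to a scalar pointwise inequality, exploiting that both $\mathcal{I}_q$ and the candidate $\hat{\omega}$ separate across the index $n$. Since $\mathcal{I}_q(v,\omega)=\sum_{n\in\mathcal{N}}(|v_n-\omega_n|^2+\alpha_n|\omega_n|^q)$ and $\hat{\omega}_n$ depends only on $(v_n,\alpha_n)$, it suffices to show that for every $v\in\C$, $\alpha\geq 0$, and $q\in[1/\rho,2]$,
\begin{equation*}
|v-\hat{x}|^2+\alpha|\hat{x}|^q \;\leq\; C\min(|v|^2,\alpha|v|^q),
\end{equation*}
where $\hat{x}=\varrho(v,\alpha|v|^{q-1})$, with $C$ uniform in $q$. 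An elementary argument (upper bound by testing $w=0$ and $w=v$; lower bound by splitting on whether $|w|\geq|v|/2$) shows that $\min_{w\in\C}(|v-w|^2+\alpha|w|^q)$ is equivalent to $\min(|v|^2,\alpha|v|^q)$ up to absolute constants, so the displayed bound yields the proposition after summing over $n$.

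To control the first summand, \eqref{eq estimate threshold rule eins} applied with $\alpha|v|^{q-1}$ in place of $\alpha$ gives $|v-\hat{x}|^2\leq C_1^2\min(|v|^2,\alpha^2|v|^{2q-2})$, and a short algebraic check (considering $|v|^{2-q}\leq\alpha$ versus $|v|^{2-q}>\alpha$) shows $\min(|v|^2,\alpha^2|v|^{2q-2})\leq\min(|v|^2,\alpha|v|^q)$. The second summand I would control by a case split on whether $|v|^{2-q}\leq D\alpha$ or not. In the first case, \eqref{eq estimate threshold rule zwei} applies (again with $\alpha|v|^{q-1}$ in place of $\alpha$) and yields $|\hat{x}|\leq C_2|v|^{1+\rho(2-q)}\alpha^{-\rho}$; raising to the $q$-th power and multiplying by $\alpha$, the exponents collapse to
\begin{equation*}
\alpha|\hat{x}|^q \;\leq\; C_2^q\,(\alpha|v|^{q-2})^{1-q\rho}\,|v|^2.
\end{equation*}
Since $q\rho\geq 1$ and the case assumption gives $\alpha|v|^{q-2}\geq 1/D$, the middle factor is at most $D^{q\rho-1}$, while the case assumption also yields $|v|^2\leq D\alpha|v|^q$, so $|v|^2$ is comparable to $\min(|v|^2,\alpha|v|^q)$. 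In the complementary case $|v|^{2-q}>D\alpha$, the crude bound $|\hat{x}|\leq(1+C_1)|v|$ coming from \eqref{eq estimate threshold rule eins} gives $\alpha|\hat{x}|^q\leq(1+C_1)^q\alpha|v|^q$, and $\alpha|v|^q<|v|^2/D$ here, so $\alpha|v|^q$ is comparable to $\min(|v|^2,\alpha|v|^q)$. The thresholding endpoint $\rho=\infty$ is the simplest: choosing $D\leq C_3$ makes $\hat{x}=0$ in the first case and the bound is trivial.

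The main obstacle will be uniformity of constants across $q\in[1/\rho,2]$. The exponents $q\rho-1$, $1-q\rho$, and $1+\rho(2-q)$ all vary with $q$, and I must verify that $C_2^q$, $(1+C_1)^q$, $D^{q\rho-1}$, and the other $q$-dependent factors are uniformly bounded on the compact interval $[1/\rho,2]$; this reduces to elementary monotonicity of $t\mapsto a^t$. The remainder is bookkeeping: matching each subregion of the case split with the appropriate dominant term of $\min(|v|^2,\alpha|v|^q)$, and disposing of the degenerate points $v=0$ and $\alpha=0$ using the conventions $\varrho(v,\alpha|v|^{q-1})=0$ at $v=0$ and $\varrho(v,0)=v$ (forced by \eqref{eq estimate threshold rule eins}) stated before Theorem \ref{theorem 1}.
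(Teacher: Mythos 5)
Your proposal is correct and follows essentially the same route as the paper: a coordinatewise reduction, with the same two-case split on $|v_n|\leq D\alpha_n|v_n|^{q-1}$ versus its complement, using \eqref{eq estimate threshold rule eins} and \eqref{eq estimate threshold rule zwei} exactly where the paper does. The only difference is that the paper imports the benchmark (near-optimality of hard-shrinkage, i.e.\ of $\min(|v_n|^2,\alpha_n|v_n|^q)$ per coordinate) from \cite{variational_denoising}, whereas you prove it directly with the two-test-point/splitting argument, which makes the proof self-contained but is not a different method.
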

\begin{remark}
The exact minimizer of \eqref{problem equiv} for $q=2$ is known to be $\big(\frac{1}{1+\alpha_n}v_n\big)_{n\in\mathcal{N}}$. However, $(x,\alpha)\mapsto \frac{1}{1+\alpha}x$ is not a shrinkage rule since \eqref{eq estimate threshold rule zwei} is violated. On the other hand, the rule $\varrho(x,\alpha)=\frac{1}{1+\frac{\alpha}{|x|}}x$ is a shrinkage rule with constant $\rho=1$. The $q$-dependent expression $\varrho(x,\alpha|x|^{q-1})$ for $q=2$ then yields the exact minimizer. In this sense the exact minimizer for $q=2$ is still derived from shrinkage. %Since $\rho=1$, we can also choose $q=1$. In this case the shrinkage equals the n-degree garotte shrinkage for $n=1/2$. 
\end{remark}
%\begin{figure}
%\begin{center}
%\includegraphics[width=6cm]{interpolShrinkage.eps}
%%\input{interpolShrinkage.pstex_t}
%\caption{$\varrho_I(x,\alpha)$}\label{varrho I}
%\end{center}     
%\end{figure}
\begin{proof}[Proof of Proposition \ref{proposition solving equiv}]
First, we consider $\frac{1}{2}\leq \rho<\infty$. Due to \eqref{eq estimate threshold rule eins}, the sequence $\varrho(v_n,\alpha_n|v_n|^{q-1})_{n\in\mathcal{N}}$ is indeed contained in $\ell_2(\mathcal{N})$. 
Adapting results in \cite{variational_denoising} to our setting yields that the hard-shrinked sequence $\varrho_h(v_n,\alpha_n|v_n|^{q-1} )_{n\in\mathcal{N}}$ minimizes \eqref{problem equiv} up to a constant factor. By using the short-hand notation
\begin{align*}
K_n & := \big|v_n-\varrho_h(v_n,\alpha_n|v_n|^{q-1})\big|^2+\alpha_n \big|\varrho_h(v_n,\alpha_n|v_n|^{q-1})\big|^q,\\
G_n &:= |v_n-\varrho(v_n,\alpha_n|v_n|^{q-1})|^2+\alpha_n |\varrho(v_n,\alpha_n|v_n|^{q-1})|^q,
\end{align*}
we consider each $n$ in the sequence norms separately. We aim to verify $G_n\lesssim K_n$ independently of $n$. For $v_n=0$, we have $G_n=K_n$. Now, we suppose $v_n\neq 0$. Since \eqref{eq estimate threshold rule zwei} gets weaker as $\rho$ and $D$ decrease, we may assume that $q=\frac{1}{\rho}$ and $D\leq 1$. Case 1: For $|v_n|\leq D\alpha_n|v_n|^{q-1}$, \eqref{eq estimate threshold rule eins} and \eqref{eq estimate threshold rule zwei} with $\rho=\frac{1}{q}$ yield
\begin{align*}
G_n & \leq C_1^2 |v_n|^2+\alpha_n C_2^q|v_n|^q \frac{|v_n|}{\alpha_n |v_n|^{q-1}}\\
&\leq C_1^2 |v_n|^2+C_2^q|v_n|^2\lesssim |v_n|^2= K_n.
\end{align*}
Case 2: For $|v_n|> D\alpha_n|v_n|^{q-1}$, we have $1/D> \alpha_n|v_n |^{q-2}$, and the estimate \eqref{eq estimate threshold rule eins} yields
\begin{align*}
G_n&\leq C_1^2(\alpha_n|v_n|^{q-1})^2 +\alpha_n \big(|v_n|+C_1\min(|v_n|,\alpha_n |v_n|^{q-1})\big)^q\\
&\leq C_1^2 \alpha_n|v_n|^q\alpha_n |v_n|^{q-2}+\alpha_n|v_n|^q(1+C_1\alpha_n |v_n|^{q-2})^q\\
&\leq C_1^2 \alpha|v_n|^q \tfrac{1}{D}+(1+C_1/D)^q\alpha_n|v_n|^q\lesssim \alpha_n|v_n|^q \leq K_n/D.
\end{align*}
Hence, $G_n\lesssim K_n$ holds in both cases. Similar arguments verify the statement for $\rho=\infty$.
\end{proof}
Our main result follows from combining both propositions:
\begin{proof}[Proof of Theorem \ref{theorem 1}]
According to Proposition \ref{proposition solving equiv}, $\varrho(v_n,\alpha_n|v_n|^{q-1})_{n\in\mathcal{N}}$ is a minimizer of \eqref{problem equiv} up to a constant factor, where $v=\widetilde{F}^*L^\#f$. For $q=\frac{1}{\rho}$, Proposition \ref{proposition equiv 1} then implies Theorem \ref{theorem 1}. If \eqref{eq:boundedness condition on delta} holds, $\widetilde{F}^*L^\#LF$ and $\widetilde{F}^*F$ are bounded on $\ell_2^{(\alpha_n)}$. Interpolation between $\ell_{1/\rho}^{(\alpha_n)}$ and $\ell_2^{(\alpha_n)}$ yields uniform boundedness on $\ell_q^{(\alpha_n)}$, for $q\in[\frac{1}{\rho},2]$.% This concludes the proof.
\end{proof}
\begin{remark}
We did not use the Hilbert space structure of $\mathcal{H}'$ and in fact Theorem \ref{theorem 1} still holds if $\mathcal{H}'$ is a (quasi) Banach space.
\end{remark}

\section{Sparse Approximation}\label{section:sparse approximation}
Given $h\in\mathcal{H}$ (possibly noisy) and a frame $\{f_n\}_{n\in\mathcal{N}}$ for $\mathcal{H}$, an important problem in sparse signal representation is to find the minimizer of 
\begin{equation}\label{eq:classic sparse}
\min_{\omega\in\ell_2}\|\omega\|_{\ell_q} \text{ subject to } F\omega\approx h,
\end{equation}
for $q\in[0,1)$. Under additional requirements on $\{f_n\}_{n\in\mathcal{N}}$ and $h$, the solution for $q\in[0,1)$ can be obtained from solving the much simpler convex problem with $q=1$, cf.~\cite{CanRomTao,DonElaTem}. However, these results are limited to finite $\mathcal{N}$, and the additional requirements are not satisfied in many situations.
 
The problem \eqref{eq:classic sparse} is often replaced by a variational formulation, and one seeks to minimize
\begin{equation*}
\mathcal{K}_q(h,\omega)=\|h-F\omega\|^2_{\mathcal{H}}+\sum_{n\in\mathcal{N}}\alpha_n|\omega_n|^q\end{equation*}
over $\omega\in\ell_2(\mathcal{N})$. For finite $\mathcal{N}$, $\ell_q$-basis-pursuit as in \cite{Yilmaz}, for instance, solves \eqref{eq:classic sparse} by minimizing $\|F^\#h\|_{\ell_q}$ over all pseudo inverses $F^\#$. The associated variational formulation is
\begin{equation*}
\min_{F^\#}\big(\min_{\omega\in\ell_2}\big(\|F^\#h-\omega\|_{\ell_2} + \sum_{n\in\mathcal{N}}\alpha_n|\omega_n|^q\big)\big).
\end{equation*}
We do not require $\mathcal{N}$ to be finite, and instead of minimizing over $F^\#$, we suppose to have a particular pseudo inverse $\widetilde{F}^*$ being the analysis operator of a dual frame $\{\tilde{f}_n\}_{n\in\mathcal{N}}$ such that $\widetilde{F}^*F$ is bounded on $\ell_q^{(\alpha_n)}(\mathcal{N})$: 
\begin{theorem}\label{theorem 2}
Given a bi-frame $\{f_n\}_{n\in\mathcal{N}}$ and $\{\tilde{f}_n\}_{n\in\mathcal{N}}$, let $\varrho$ be a shrinkage rule with $\rho\in[\frac{1}{2},\infty]$. Suppose that $\widetilde{F}^*F$ is bounded on $\ell_{1/\rho}^{(\alpha_n)}(\mathcal{N})$. Let $q=\frac{1}{\rho}$, then there is a constant $C>0$ such that for all $h\in\mathcal{H}$ and for all $\omega\in\ell_2(\mathcal{N})$ 
\begin{equation*}
\mathcal{K}_q(h,\hat{\omega})\leq C\mathcal{K}_q(h,\omega),
\end{equation*}
where $\hat{\omega}=\widetilde{F}^*F\varrho(v_n,\alpha_n|v_n|^{q-1})_{n\in\mathcal{N}}$ with $v=\widetilde{F}^*h$ or $\hat{\omega}=\varrho(v_n,\alpha_n|v_n|^{q-1})_{n\in\mathcal{N}}$. If \eqref{eq:boundedness condition on delta} holds, then the statement extends to all $q\in[\frac{1}{\rho},2]$, and $C$ is independent of $q$.
\end{theorem}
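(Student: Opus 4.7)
The plan is to obtain Theorem~\ref{theorem 2} as an immediate specialization of Theorem~\ref{theorem 1}. The natural identifications are: take the ambient Hilbert space in Theorem~\ref{theorem 1} to be $\ell_2(\mathcal{N})$, the target space to be $\mathcal{H}$, set $L = F$ and $L^\# = \widetilde{F}^*$, and take the bi-frame in $\ell_2(\mathcal{N})$ to be the (self-dual) standard basis $\{e_n\}_{n\in\mathcal{N}}$. The penalty term of Theorem~\ref{theorem 1} then becomes $\sum_n \alpha_n|\langle\omega,e_n\rangle|^q = \sum_n \alpha_n|\omega_n|^q$, and the approximation term $\|h-F\omega\|_\mathcal{H}^2$ matches exactly.

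First I would check the hypotheses. The bi-frame identity $F\widetilde{F}^* = \id_\mathcal{H}$ yields $F\widetilde{F}^*F = F$, so $\widetilde{F}^*$ is indeed a bounded pseudo-inverse of $F$; the same identity shows $F$ is surjective, so $\range(L) = \mathcal{H}$ and the condition $h \in \range(L)$ in Theorem~\ref{theorem 1} is automatic. Under the standard-basis identification, the operators denoted "$F$" and "$\widetilde{F}^*$" internal to Theorem~\ref{theorem 1} are the identity on $\ell_2(\mathcal{N})$. Consequently the assumption "$\widetilde{F}^*L^\#LF$ bounded on $\ell_{1/\rho}^{(\alpha_n)}$" reduces to "$\widetilde{F}^*F$ bounded on $\ell_{1/\rho}^{(\alpha_n)}$", which is precisely the hypothesis of Theorem~\ref{theorem 2}. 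The additional boundedness of "$\widetilde{F}^*F$" required in Theorem~\ref{theorem 1} for the alternative form of $\hat{g}$ becomes the trivial boundedness of the identity, so that form is always available here; this gives the second candidate $\hat{\omega} = \varrho(v_n,\alpha_n|v_n|^{q-1})_{n\in\mathcal{N}}$. The two candidates $L^\#LF\varrho(\cdots)$ and $F\varrho(\cdots)$ from Theorem~\ref{theorem 1} collapse to $\widetilde{F}^*F\varrho(v_n,\alpha_n|v_n|^{q-1})_{n\in\mathcal{N}}$ and $\varrho(v_n,\alpha_n|v_n|^{q-1})_{n\in\mathcal{N}}$, and $v = \widetilde{F}^*L^\#h = \widetilde{F}^*h$, matching the statement. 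The interpolation step that extends the result to $q\in[\tfrac{1}{\rho},2]$ under \eqref{eq:boundedness condition on delta} transfers verbatim, since $\widetilde{F}^*F$ is automatically bounded on $\ell_2^{(\alpha_n)}$ by the upper frame bound.

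The only real obstacle is conceptual: one must keep straight that the symbols $F$ and $\widetilde{F}^*$ play two different roles, once as the operators of Theorem~\ref{theorem 2} between $\mathcal{H}$ and $\ell_2(\mathcal{N})$, and once as operators internal to Theorem~\ref{theorem 1} on the ambient Hilbert space (which after the reduction is $\ell_2(\mathcal{N})$ equipped with its standard basis). If this symbolic overloading is judged uncomfortable, one can instead mimic the proof of Theorem~\ref{theorem 1} directly: bound $\|h-F\hat{\omega}\|_\mathcal{H}^2 \leq \|F\|^2\|v-\hat{\omega}\|_{\ell_2}^2$ to get $\mathcal{K}_q(h,\hat{\omega}) \lesssim \mathcal{I}_q(v,\hat{\omega})$, apply Proposition~\ref{proposition solving equiv} to pass to $\mathcal{I}_q(v,\omega)$ for arbitrary $\omega$, and then substitute $\omega = \widetilde{F}^*Fg$ and use boundedness of $\widetilde{F}^*$ together with the hypothesis on $\widetilde{F}^*F$ to conclude $\mathcal{I}_q(v,\widetilde{F}^*Fg) \lesssim \mathcal{K}_q(h,g)$ for every $g\in\ell_2(\mathcal{N})$.
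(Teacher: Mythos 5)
Your proposal is correct and is essentially the paper's own proof: the paper likewise obtains Theorem~\ref{theorem 2} by specializing Theorem~\ref{theorem 1} with $\mathcal{H}\to\ell_2(\mathcal{N})$, $\mathcal{H}'\to\mathcal{H}$, $L\to F$, $L^\#\to\widetilde{F}^*$, and the bi-frame replaced by the canonical basis of $\ell_2(\mathcal{N})$. Your version simply spells out the hypothesis checks (that $\widetilde{F}^*$ is a bounded pseudo-inverse, that $h\in\range(F)$ is automatic, and that the internal operators collapse to the identity) which the paper leaves implicit.
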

\begin{remark}
%In general, the matrix operator $\big( \langle L^\#L f_m,\tilde{f}_n\rangle \big)_{m,n}$ equals $\widetilde{F}^*L^\#LF$. 
For sufficiently smooth wavelet bi-frames with vanishing moments, the operator $\widetilde{F}^*F$ is bounded on $\ell_{1/\rho}^{(\alpha_n)}$ provided that $(\alpha_n)_{n\in\mathcal{N}}$ satisfies \eqref{eq:boundedness condition on delta}, cf.~\cite{ehler3}.
\end{remark}
\begin{proof}
We replace $\mathcal{H}$, $\mathcal{H}'$, $L$, $L^\#$, and the bi-frame $\{f_n\}_{n\in\mathcal{N}}$, $\{\tilde{f}_n\}_{n\in\mathcal{N}}$ in \eqref{problem 1} by $\ell_2(\mathcal{N})$, $\mathcal{H}$, $F$, $\widetilde{F}^*$, and the canonical basis $\{e_n\}_{n\in\mathcal{N}}$ for $\ell_2(\mathcal{N})$, respectively. The condition on $\widetilde{F}^*L^\#LF$ in Theorem \ref{theorem 1} becomes `$\widetilde{F}^*F$ is bounded on $\ell_{1/\rho}^{(\alpha_n)}(\mathcal{N})$', and Theorem \ref{theorem 1} implies Theorem \ref{theorem 2}. 
\end{proof}
%\begin{remark}
%Straigthforward modifications of the proof of Proposition \ref{proposition equiv 1} yield that Theorem \ref{theorem 2} also holds for the choice $\hat{\omega}=\varrho(v_n,\alpha_n|v_n|^{q-1})_{n\in\mathcal{N}}$.
%\end{remark}

\section{Explicit Shrinkage Rules Between Hard- and Soft-Shrinkage}\label{section interpolation} 
This section is dedicated to finding a family of shrinkage rules which is adapted to $q$ in \eqref{problem equiv}. For $q\in [0,1)$, we will use $c_q=2^{q-2}\frac{(2-q)^{2-q}}{(1-q)^{1-q}}$. It is monotonically decreasing with $c_0=1$, and continuous extension yields $c_1=\frac{1}{2}$, see Figure \ref{fig cq}. 

\begin{figure}
\centering
\includegraphics[width=.25\textwidth]{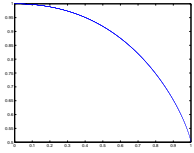}
\caption{The curve $c_q$, for $q\in [0,1]$, is continuous and continuation yields $c_0=1$ and $c_1=\frac{1}{2}$.}\label{fig cq}
\end{figure}
Due to \cite{Antoniadis01regularizationof}, the exact minimizer of \eqref{problem equiv} is sandwiched between soft- and hard-shrinkage. Let us introduce the new shrinkage rule
\begin{equation}\label{eq:new shrinkage rule}
\varrho^{(q)}_{h,s}(x,\alpha)=(x-\sign(x) q c_q \alpha){\bf 1}_{\{|x|>\alpha c_q\}},
%(2c_q-1)\varrho_h(x,c_q\alpha)+(2-2c_q)\varrho_s(x,c_q\alpha),
%\varrho^{(q)}_{0/1/2}(x,\alpha)=(2-2c_q)\varrho_s(x,c_q\alpha)+(2c_q-1)\cdot \begin{cases}
%\varrho_h(x,c_q\alpha),& q\in[0,1)\\
%\varrho^{1/2}(x,c_q\alpha), & q\in[1,2].
%\end{cases}
\end{equation}
see Figure \ref{fig new shrinkage rule}.
%\begin{figure}
%\centering
%\subfigure[$q=0.1$]{
%\includegraphics[width=.32\textwidth]{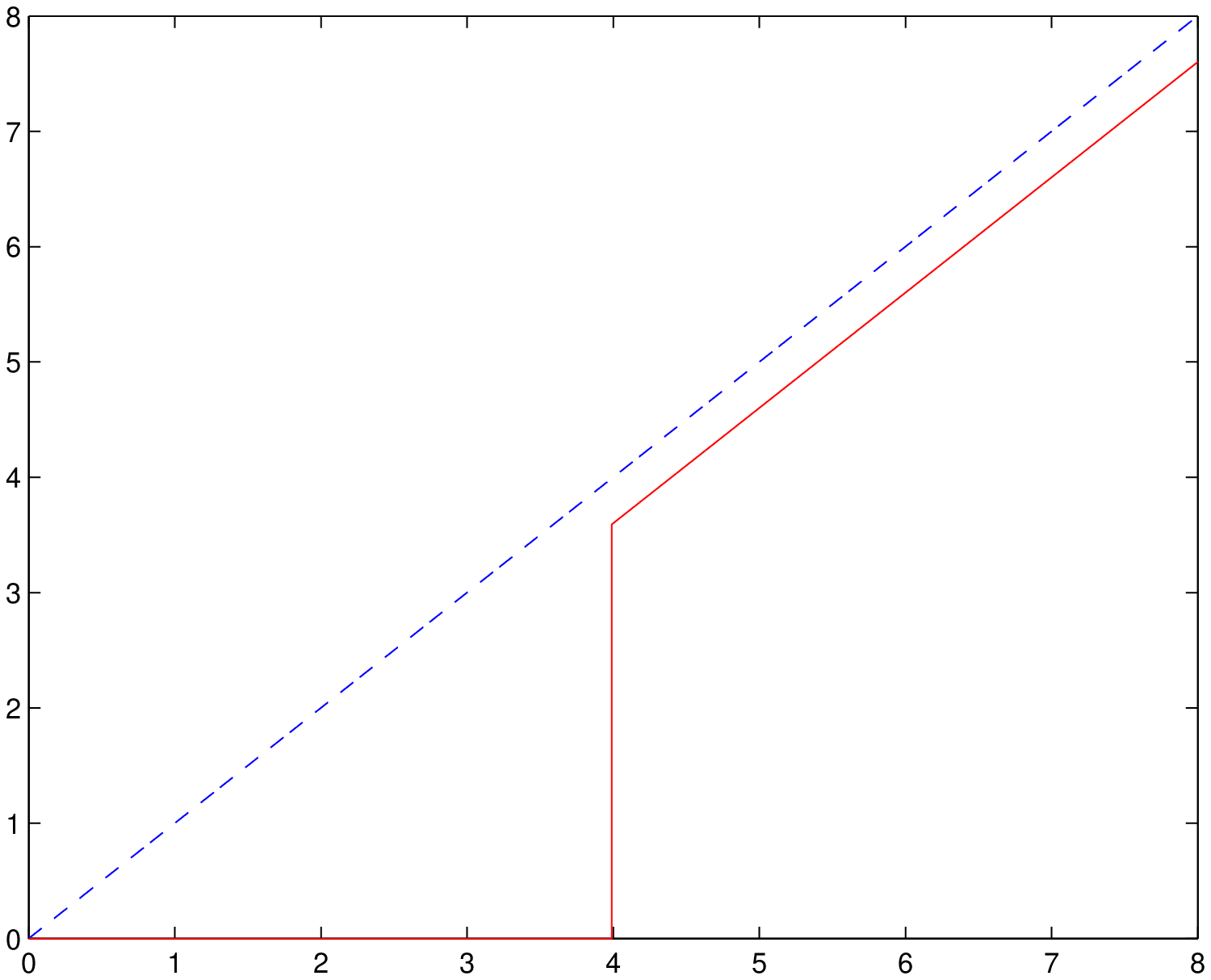}}
%\subfigure[$q=0.2$]{
%\includegraphics[width=.32\textwidth]{hs02.eps}}
%\subfigure[$q=0.3$]{
%\includegraphics[width=.32\textwidth]{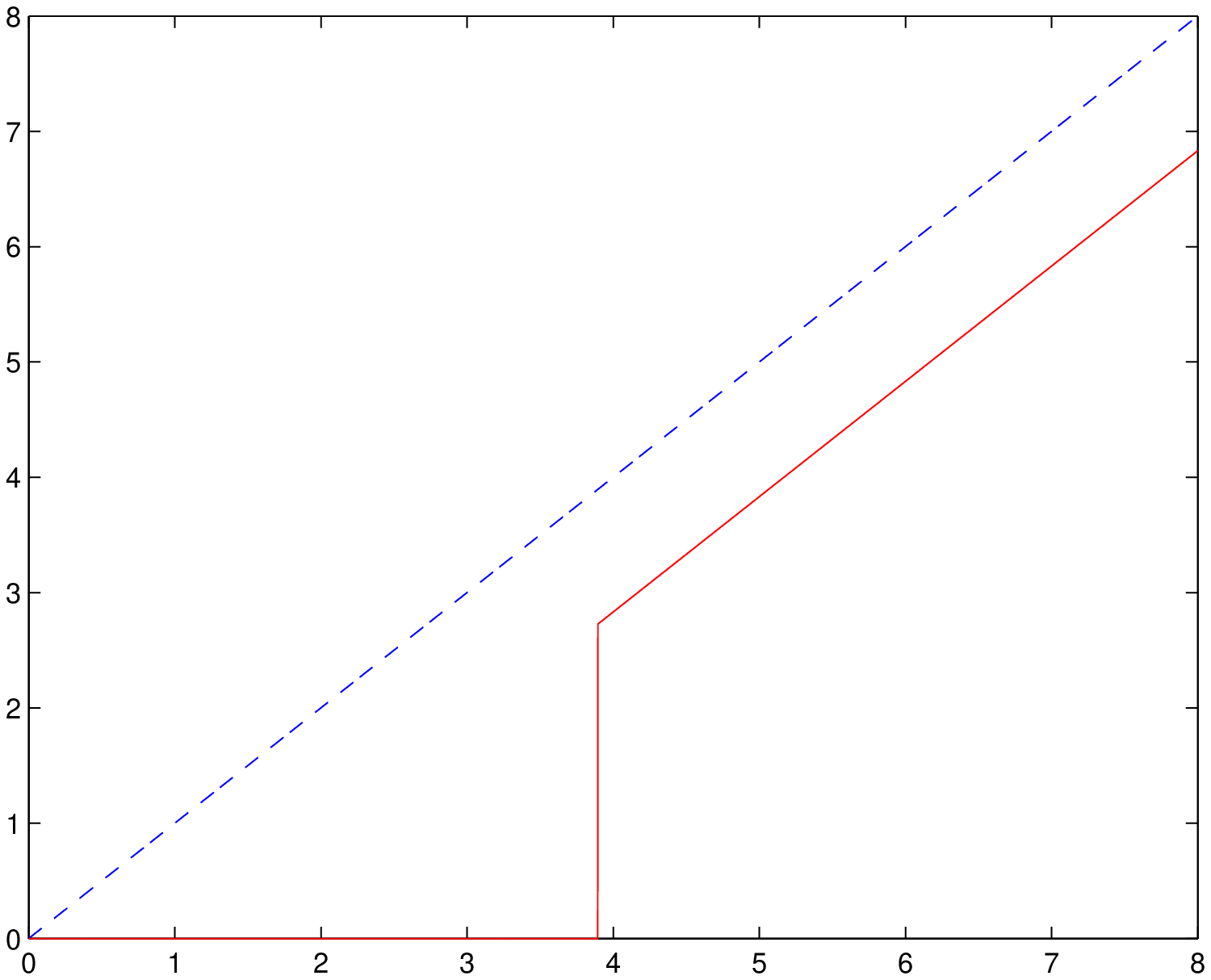}}

%\subfigure[$q=1/2$]{
%\includegraphics[width=.32\textwidth]{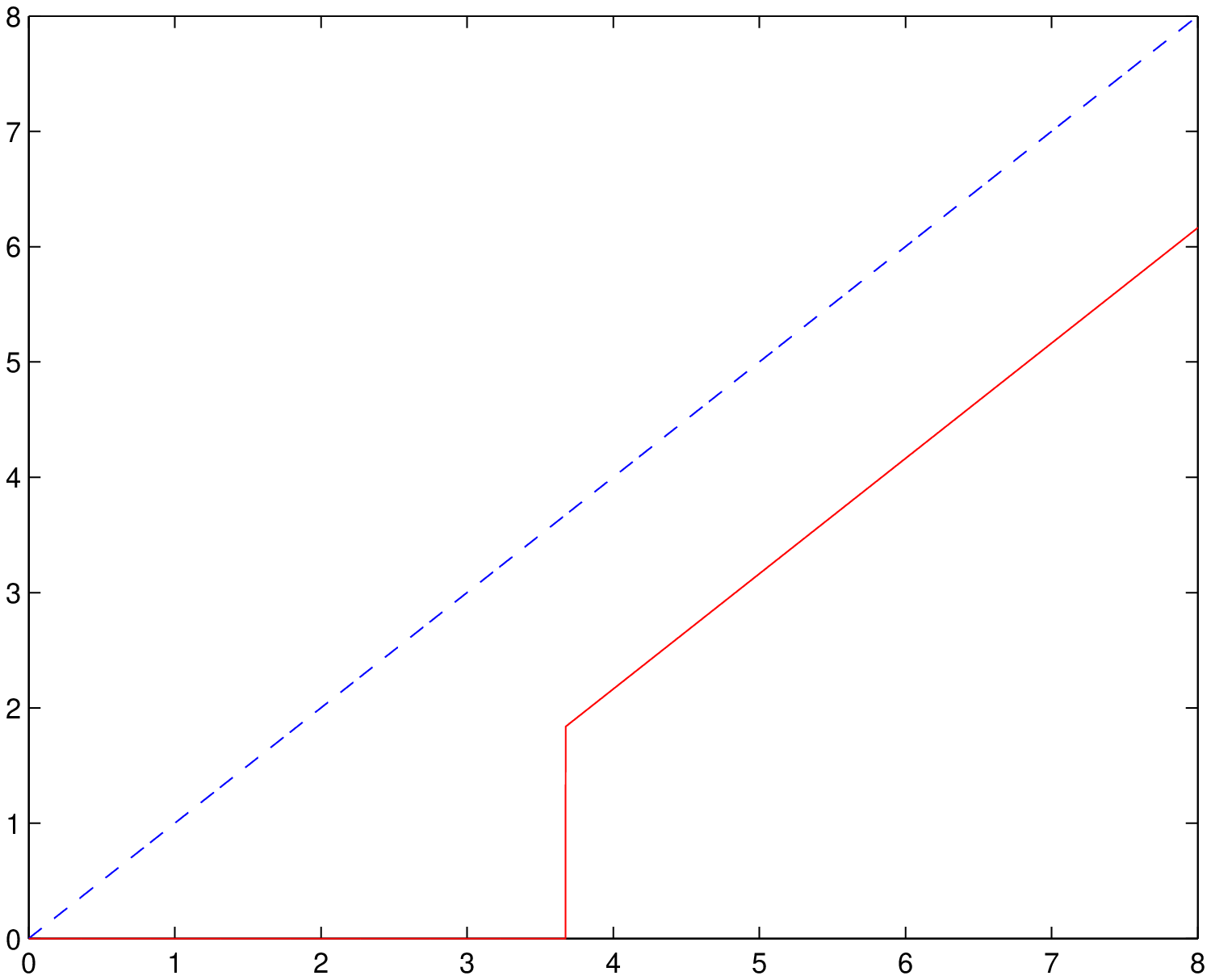}}
%\subfigure[$q=0.8$]{
%\includegraphics[width=.32\textwidth]{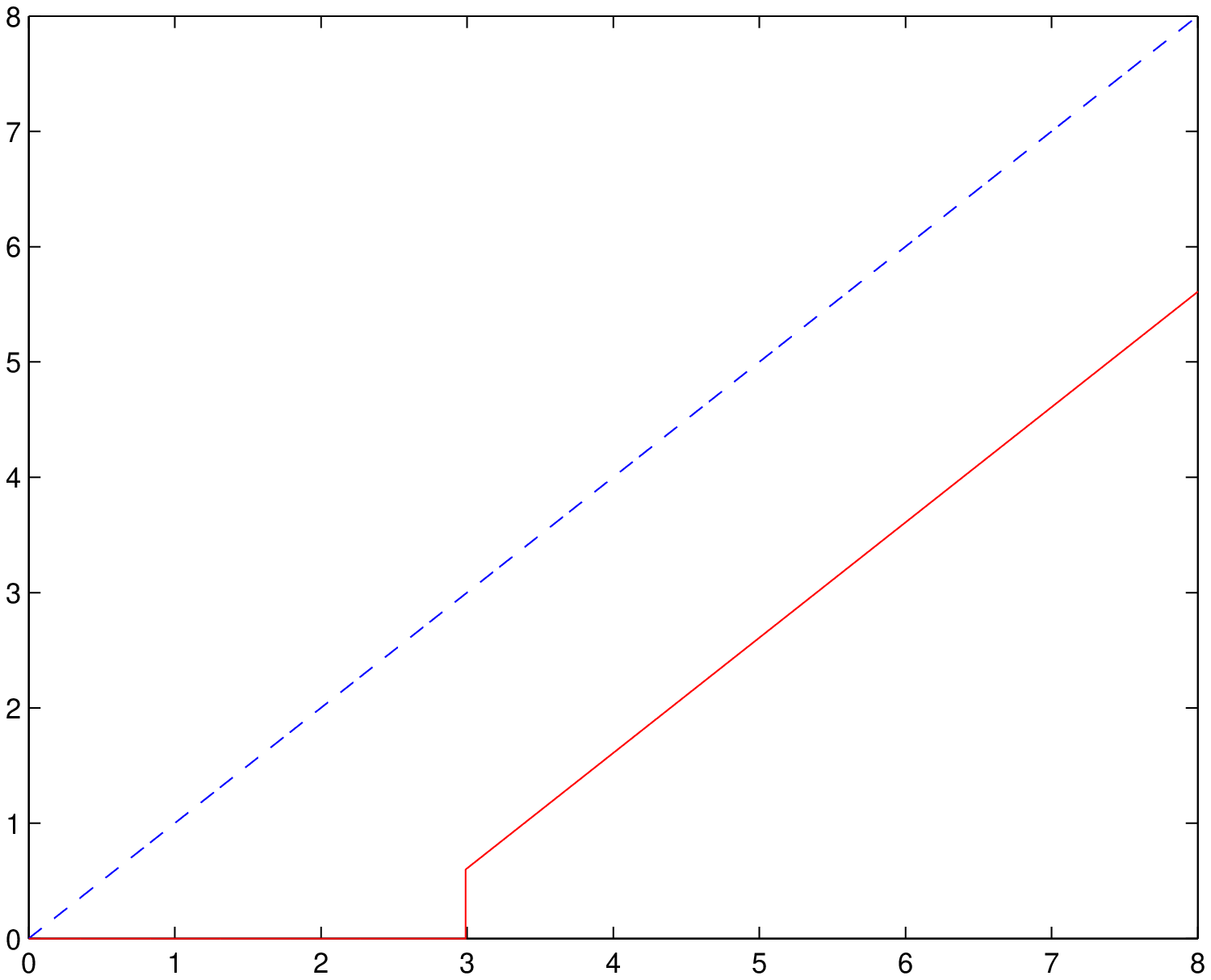}}
%\subfigure[$q=0.95$]{
%\includegraphics[width=.32\textwidth]{hs095.eps}}
%\caption{Shrinkage rule $\varrho^{(q)}_{h,s}(x,\alpha)$ for $\alpha=4$. It tends to hard-shrinkage for $q\searrow 0$. Soft-shrinkage is approximated by $q\nearrow 1$.}\label{fig new shrinkage rule}
%\end{figure}
\begin{figure}


\centering
\subfigure[$q=0.1$]{
\includegraphics[width=.237\textwidth]{hs01.eps}}
\subfigure[$q=0.3$]{
\includegraphics[width=.237\textwidth]{hs03.eps}}
\subfigure[$q=1/2$]{
\includegraphics[width=.237\textwidth]{hs05.eps}}
\subfigure[$q=0.8$]{
\includegraphics[width=.237\textwidth]{hs08.eps}}
\caption{Shrinkage rule $\varrho^{(q)}_{h,s}(x,\alpha)$ for $\alpha=4$. It tends to hard-shrinkage for $q\searrow 0$. Soft-shrinkage is approximated by $q\nearrow 1$.}\label{fig new shrinkage rule}
\end{figure}
One easily verifies that $\varrho^{(q)}_{h,s}(x,\alpha)$ has a jump of size $(1-q)c_q\alpha$ and, for $|x|>c_q\alpha$, we have 
\begin{equation*}
|x-\varrho^{(q)}_{h,s}(x,\alpha)|=qc_q\alpha.
\end{equation*} 
On the other hand, the $q$-dependent expression $\varrho^{(q)}_{h,s}(x,\alpha|x|^{q-1})$, see Figure \ref{fig new shrinkage rule q dependent}, has a jump of size $(1-q)(c_q\alpha)^{\frac{1}{2-q}}$ and, for $|x|>(c_q\alpha)^{\frac{1}{2-q}}$, we obtain 
\begin{equation}\label{eq:tends to zero for small q}
|x-\varrho^{(q)}_{h,s}(x,\alpha|x|^{q-1})|=q c_q \alpha|x|^{q-1}.
\end{equation}
Hence, for $q\in [0,1)$, the difference goes to zero as $x$ goes to infinity. 
%\begin{figure}
%\centering
%\subfigure[$q=0.1$]{
%\includegraphics[width=.32\textwidth]{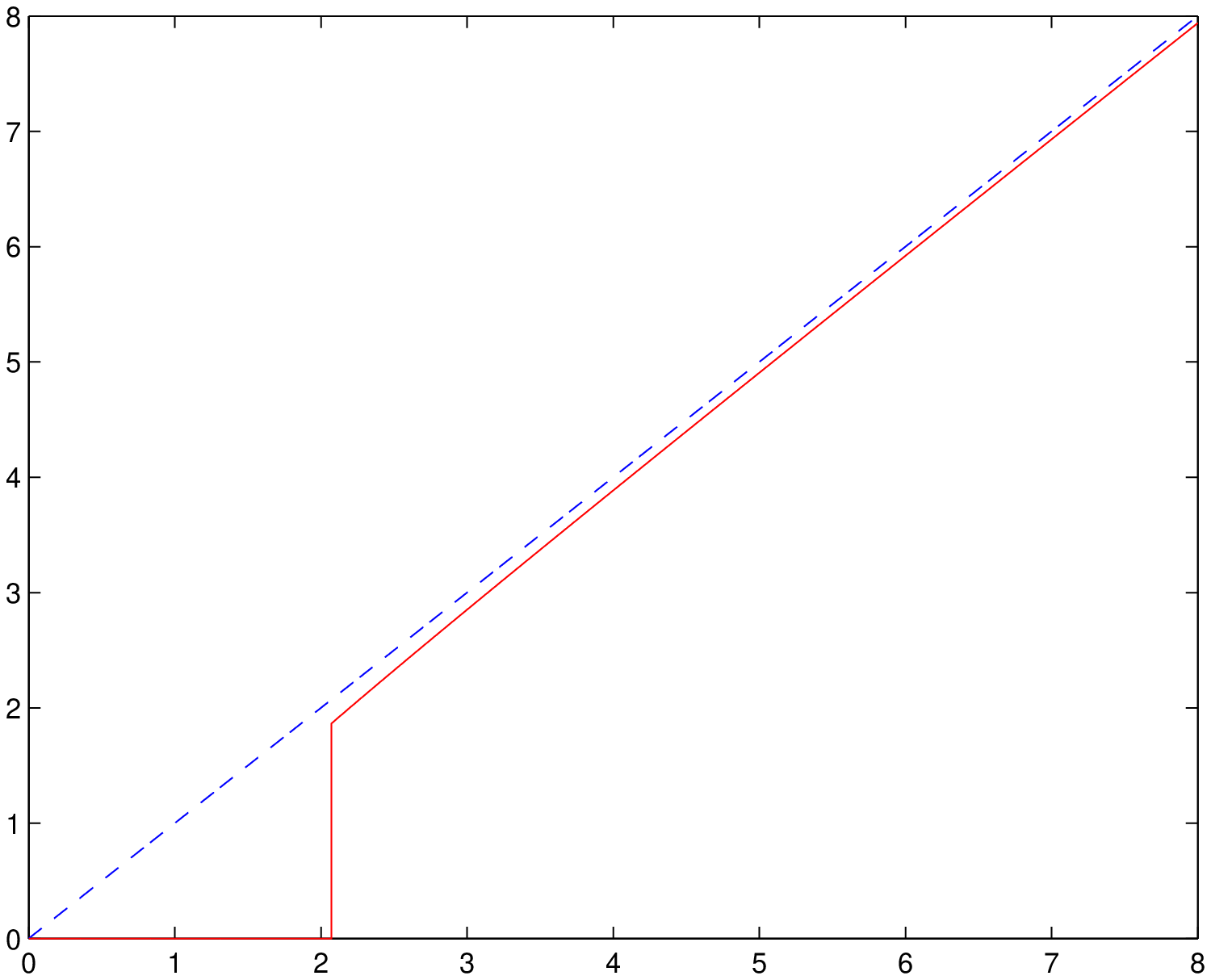}}
%\subfigure[$q=0.2$]{
%\includegraphics[width=.32\textwidth]{hsq02.eps}}
%\subfigure[$q=0.3$]{
%\includegraphics[width=.32\textwidth]{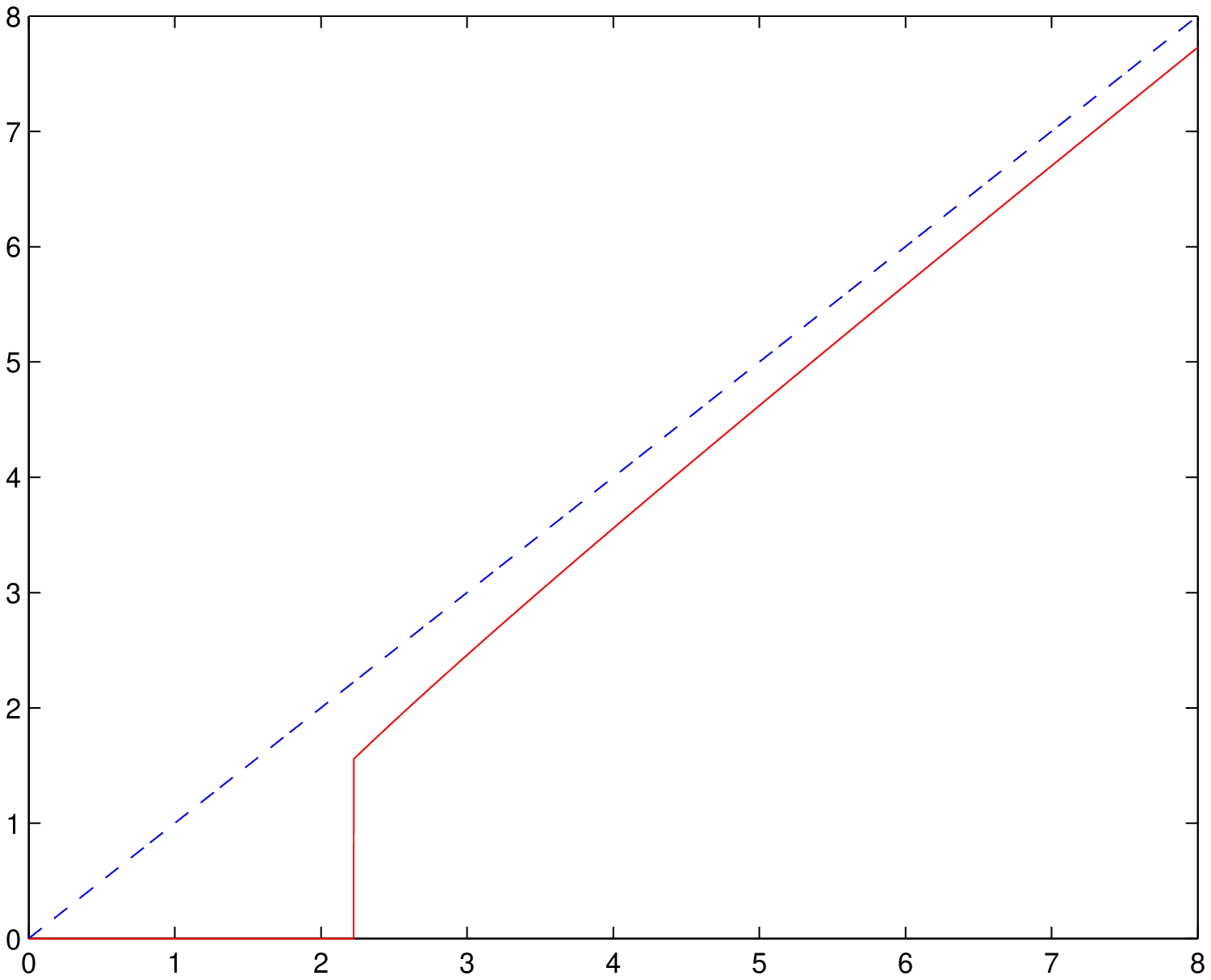}}

%\subfigure[$q=1/2$]{
%\includegraphics[width=.32\textwidth]{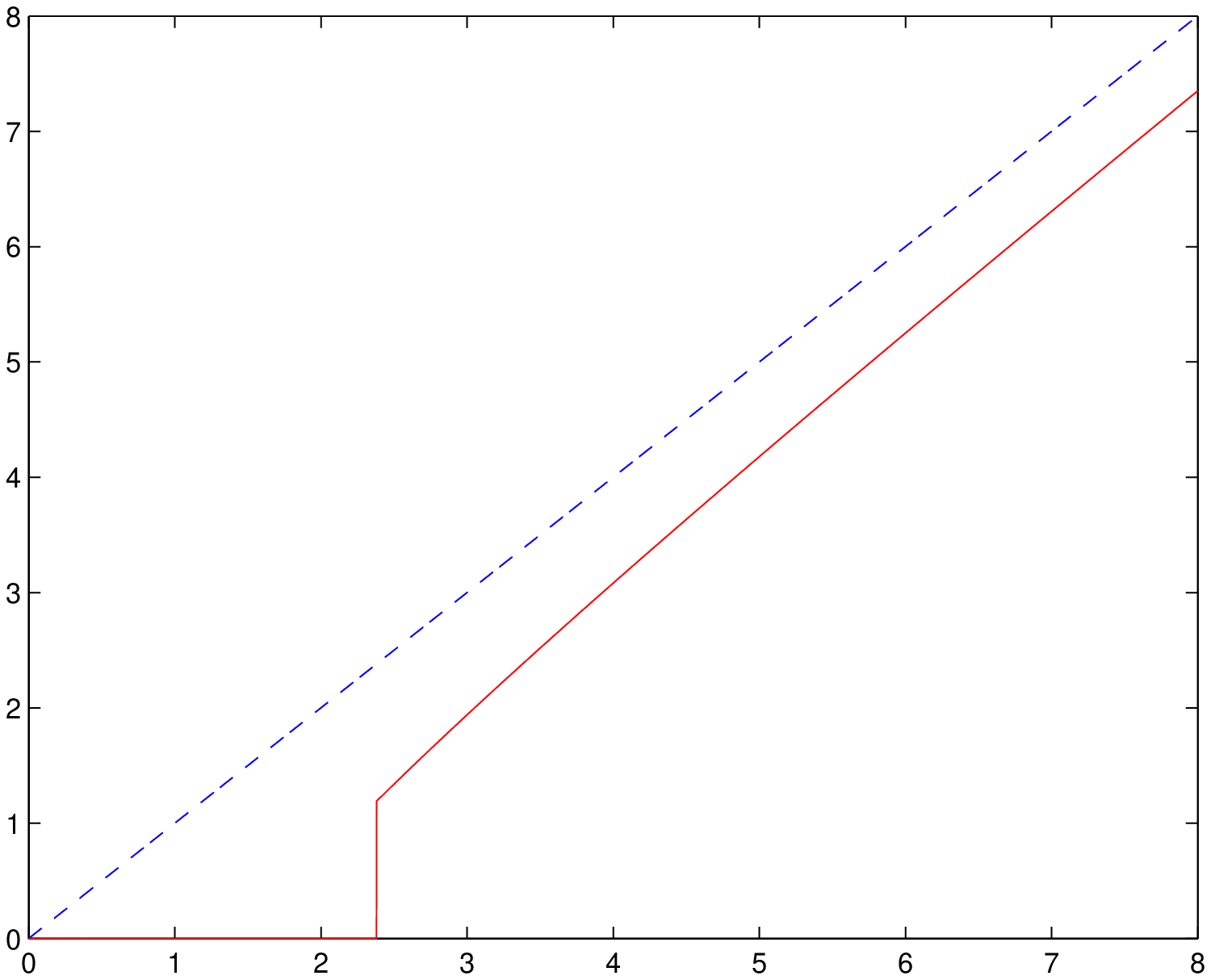}}
%\subfigure[$q=0.8$]{
%\includegraphics[width=.32\textwidth]{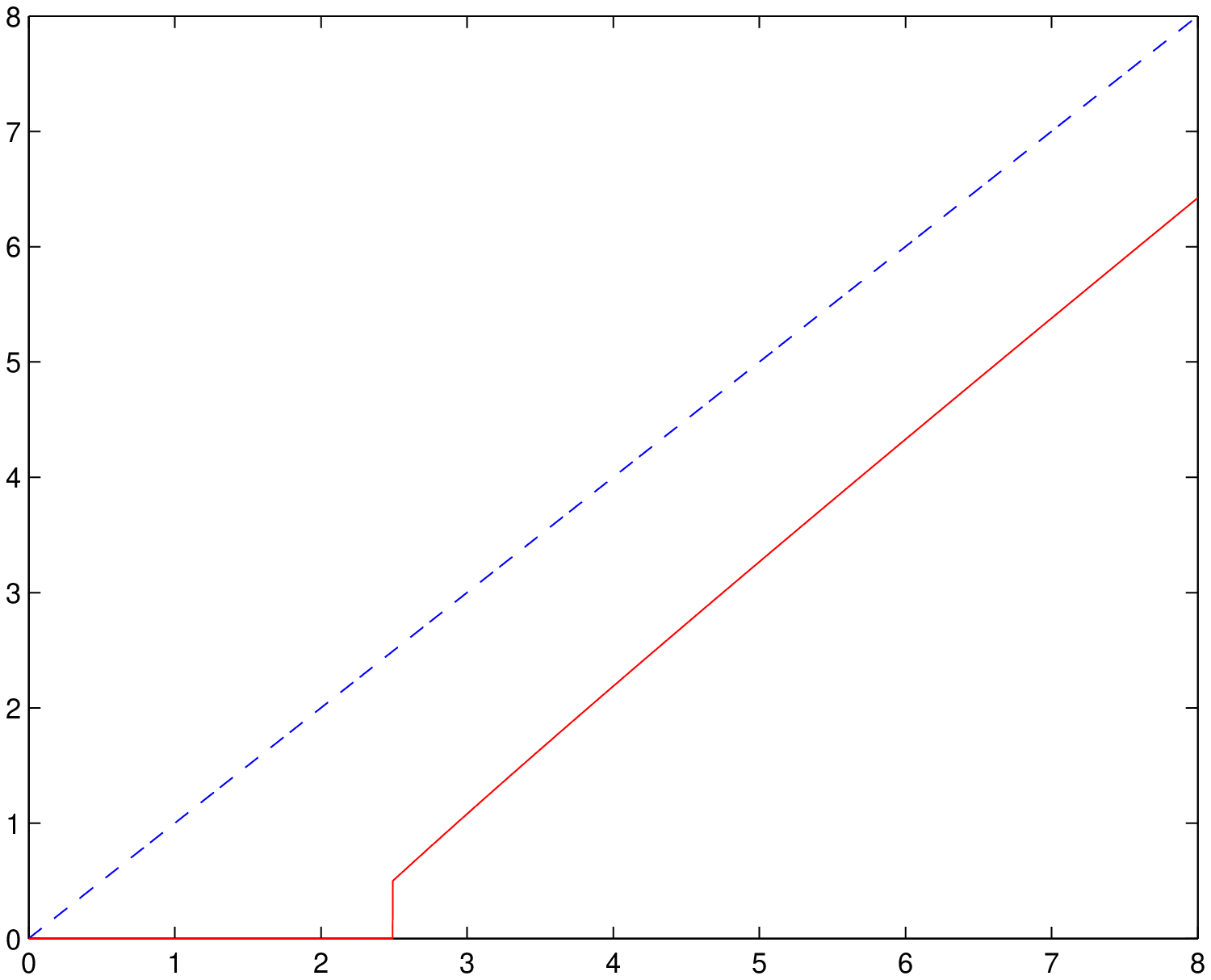}}
%\subfigure[$q=0.95$]{
%\includegraphics[width=.32\textwidth]{hsq095.eps}}
%\caption{$q$-dependent expression $\varrho^{(q)}_{h,s}(x,\alpha|x|^{q-1})$ for $\alpha=4$. According to \eqref{eq:tends to zero for small q}, the smaller $q$ the faster tends this expression to $x$.}\label{fig new shrinkage rule q dependent}
%\end{figure}
\begin{figure}


\centering
\subfigure[$q=0.1$]{
\includegraphics[width=.237\textwidth]{hsq01.eps}}
\subfigure[$q=0.3$]{
\includegraphics[width=.237\textwidth]{hsq03.eps}}
\subfigure[$q=1/2$]{
\includegraphics[width=.237\textwidth]{hsq05.eps}}
\subfigure[$q=0.8$]{
\includegraphics[width=.237\textwidth]{hsq08.eps}}
\caption{$q$-dependent expression $\varrho^{(q)}_{h,s}(x,\alpha|x|^{q-1})$ for $\alpha=4$. According to \eqref{eq:tends to zero for small q}, the smaller $q$ the faster tends this expression to $x$.}\label{fig new shrinkage rule q dependent}
\end{figure}

The following theorem says that the new rule \eqref{eq:new shrinkage rule} is well adapted to $q\in[0,1]$:
\begin{theorem}\label{theorem varrhi I tau}
The sequence $\varrho^{(q)}_{h,s}(v_n,\alpha_n|v_n|^{q-1})_{n\in\mathcal{N}}$ is an exact minimizer of \eqref{problem equiv} at the endpoints $q=0$, $q=1$. It minimizes \eqref{problem equiv} up to a constant factor in between, and it coincides with the exact minimizer on $\{n\in\mathcal{N} : |v_n| < c_q^{\frac{1}{2-q}}\alpha_n^{\frac{1}{2-q}} \}$.
\end{theorem}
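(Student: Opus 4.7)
The plan is to exploit the fact that $\mathcal{I}_q$ decouples over $n\in\mathcal{N}$, reducing the statement to the scalar problem of minimizing $F(\omega)=(v-\omega)^2+\alpha|\omega|^q$ for $v=v_n$, $\alpha=\alpha_n$, and then comparing the exact scalar minimizer $\omega^\star$ to the candidate $\hat{\omega}=\varrho^{(q)}_{h,s}(v,\alpha|v|^{q-1})$.

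The endpoints would be handled first by direct inspection. At $q=0$ the penalty $\alpha{\bf 1}_{\{\omega\neq 0\}}$ yields the classical hard-shrinkage minimizer, and substituting $q=0$, $c_0=1$ into the definition of $\varrho^{(q)}_{h,s}$ produces exactly this expression. At $q=1$ the standard first-order analysis gives soft-shrinkage with threshold $\alpha/2$, and substituting $q=1$, $c_1=\tfrac{1}{2}$ into \eqref{eq:new shrinkage rule} reproduces it.

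For $q\in(0,1)$ the argument splits into two parts. The coincidence claim on $\{n\in\mathcal{N}: |v_n|<(c_q\alpha_n)^{1/(2-q)}\}$ comes from locating the breakpoint at which $\omega^\star$ jumps from zero to a nonzero value. I would set up two equations at this jump: the interior first-order condition $F'(\omega)=0$, and the break-even condition $F(\omega)=F(0)=v^2$. Eliminating $\omega$ should yield the clean identity $v^{2-q}=c_q\alpha$, forcing $\omega^\star=0$ precisely on the prescribed set; matching this with the vanishing condition $|v|^{2-q}\leq c_q\alpha$ for $\varrho^{(q)}_{h,s}(v,\alpha|v|^{q-1})$ completes the coincidence claim. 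Minimization up to a constant factor on the complementary set is then inherited from Proposition~\ref{proposition solving equiv}, provided I verify that $\varrho^{(q)}_{h,s}$ satisfies \eqref{eq estimate threshold rule eins}--\eqref{eq estimate threshold rule zwei} uniformly in $q\in[0,1]$. Since $c_q\in[\tfrac{1}{2},1]$ and the rule is a thresholding rule (so $\rho=\infty$ and $C_3=c_q\geq\tfrac{1}{2}$), both conditions are routine with constants independent of $q$, so the bound delivered by Proposition~\ref{proposition solving equiv} is uniform in $q$.

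The main obstacle is the algebra at the breakpoint: extracting the specific form $c_q=2^{q-2}(2-q)^{2-q}(1-q)^{q-1}$ from the simultaneous first-order and break-even equations, and checking that this is exactly the constant needed to align the vanishing set of the proposed rule with that of $\omega^\star$. Once this identification is made, the remainder of the argument is a short reduction to the already-established Proposition~\ref{proposition solving equiv}.
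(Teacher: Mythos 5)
Your proposal is correct and follows essentially the same route as the paper: endpoints by algebraic identification with the known hard- and soft-shrinkage minimizers, the constant-factor bound via Proposition~\ref{proposition solving equiv} (the rule is a thresholding rule, hence $\rho=\infty$), and the coincidence claim by matching the vanishing set of $\varrho^{(q)}_{h,s}(v,\alpha|v|^{q-1})$ with that of the exact minimizer. The only difference is that you derive the threshold identity $|v|^{2-q}=c_q\alpha$ from the simultaneous first-order and break-even conditions (which does work out, giving $\omega=((1-q)\alpha)^{1/(2-q)}$ and $v=\tfrac{2-q}{2(1-q)}\omega$), whereas the paper simply cites Lorenz and Antoniadis--Fan for this fact.
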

\begin{proof}
Soft-shrinkage $\varrho_s(v_n,\frac{\alpha_n}{2})_{n\in\mathcal{N}}$ is the exact minimizer of \eqref{problem equiv}, for $q=1$, cf.~\cite{variational_denoising}. Note that 
\begin{equation*}
\varrho_s(v_n,\frac{\alpha_n}{2})=\varrho^{(1)}_{h,s}(v_n,\alpha_n),\quad\text{for all $n\in\mathcal{N}$}.
\end{equation*}
The exact minimizer for $q=0$ is hard-shrinkage
$\varrho_h(v_n,\sqrt{\alpha_n})_{n\in\mathcal{N}}$, see \cite{Lorenz:2008ab}, and we have the identity
\begin{equation*}
 \varrho_h(v_n,\sqrt{\alpha_n})=\varrho^{(0)}_{h,s}(v_n,\alpha_n|v_n|^{-1})
\quad\text{for all $n\in\mathcal{N}$}.
\end{equation*}
The shrinkage rule $\varrho^{(q)}_{h,s}$ satisfies \eqref{eq estimate threshold rule zwei} for $\rho=\infty$. Hence due to Proposition \ref{proposition solving equiv}, it minimizes \eqref{problem equiv} up to a constant factor.

We have $\varrho^{(q)}_{h,s}(v_n,\alpha_n|v_n|^{q-1})=0$ iff $|v_n|\leq c_q\alpha_n|v_n|^{q-1}$. Since $|v_n|\leq c_q\alpha_n|v_n|^{q-1}$ is equivalent to $|v_n|^{2-q}\leq c_q\alpha_n$, it is also equivalent to $|v_n|\leq c^{\frac{1}{2-q}}_q\alpha^{\frac{1}{2-q}}_n$,  for $q\in (0,1)$. According to the results in \cite{Lorenz:2008ab}, see also \cite{Antoniadis01regularizationof}, each exact minimizer $(\hat{\omega}_n)_{n\in\mathcal{N}}$ satisfies $\hat{\omega}_n=0$ for $|v_n|< c^{\frac{1}{2-q}}_q\alpha^{\frac{1}{2-q}}_n$. %This concludes the proof.
\end{proof}
Due to Theorem \ref{theorem varrhi I tau}, the rule $\varrho^{(q)}_{h,s}$ is an adaptation to $q\in[0,1]$. This might also be useful for parameter fitting: While $\alpha=(\alpha_n)_{n\in\mathcal{N}}$ can be fitted to $f$ and $L$ by considering \eqref{eq:curve}, the new family $\varrho^{(q)}_{h,s}$ provides additional flexibility to optimize the choice of $q$ as well. One optimizes $\alpha=\alpha(q)$ as in \eqref{eq:curve}, one may then vary $q\in[0,1]$ and may optimize this sparsity parameter by analyzing the  univariate curve $\alpha(q)$.

\section{Iterative Shrinkage Strategies}\label{section:Landweber}
In the present section, the derived shrinkage strategies in Section \ref{section interpolation} are applied to inverse problems. We slightly change our perspective and consider the problem 
\begin{equation}\label{eq:original}
\arg\min_{g}\big(\| f-\mathcal{T}g\|\big)
\end{equation}
in which the operator $\mathcal{T}$ does not have a bounded pseudo inverse or the norm is extremely big. Such an ill-posed problem needs regularization. During the last decade, regularization with sparsity constraints has attracted significant attention, see, for instance, \cite{Bonesky:2007aa,Bonesky:2009aa,daub_inverse_defrise,Daubechies:2008aa,Engl:1996aa,Fornasier:2007aa,Lorenz:2008ab,Ramlau:2006aa}. One solves
\begin{equation}\label{eq:Fredholm regularized}
\arg\min_g\big(\|f - \mathcal{T}g\|^2+\alpha\phi(g)\big),
\end{equation}
where $\phi$ is a measure of the sparsity of $g$ in some chosen dictionary, and the nonnegative regularization parameter $\alpha$ weights the sparsity term. %Hansen has introduced the L-curve criterion to chose $\alpha$ \cite{HANSEN:1992ab}.

\subsection{Landweber Iteration with Shrinkage}\label{subsection:landweber}
A shrinked Landweber iteration has been developed in \cite{daub_inverse_defrise} to minimize \eqref{eq:Fredholm regularized} for the term $\phi(g)=\sum_k |g_k|^q$, provided that $1\leq q\leq 2$ and $(g_k)_k$ are the coefficients for $g$'s representation in an orthonormal basis. To reduce notation, let us assume that $f$ and $g$ are already discretized and hence are just sequences. For $q=1$, the term $\phi(g)$ enforces sparsity. The  minimization \eqref{eq:Fredholm regularized} then is well-posed, and the iteration given by
\begin{align}
g^0 & = 0, \label{eq:zero starting vector} \\ 
g^{j+1} & = S_\alpha(g^j+\mathcal{T}^*f-\mathcal{T}^*\mathcal{T}g^j), \quad\text{ where $S_\alpha(x)_n=\varrho_s(x_n,\alpha)$}, \label{eq:shrink 1}
\end{align}
converges towards the minimizer of \eqref{eq:Fredholm regularized}, see \cite{daub_inverse_defrise}. Soft-shrinkage occurs in this iterative scheme, because it is the exact minimizer of \eqref{problem equiv}. To address other $1<q\leq 2$ in \eqref{eq:Fredholm regularized} with $\phi(g)=\sum_k |g_k|^q$, we need to apply the shrinkage rule that corresponds to the exact minimizer of \eqref{problem equiv} for this particular $q$. On the other hand, it is shown in \cite{Zarzer:2009aa} that \eqref{eq:Fredholm regularized} with the stronger sparsity requirements $0<p<1$ is still well-posed and a regularization of the original problem \eqref{eq:original}. However, the nonconvexity of $\phi(g)$ in this case makes it difficult to design a numerically attractive algorithm for the actual minimization. The $q$-dependent expression of the shrinkage rule $\varrho^{(q)}$ is not the exact minimizer of \eqref{problem equiv}, but still a minimizer up to a constant factor. Motivated by the results in Section \ref{section interpolation}, we propose to replace soft-shrinkage with the $q$-dependent expression of $\varrho^{(q)}$, i.e., to replace $S_\alpha$ in \eqref{eq:shrink 1} with
\begin{equation*}
\tilde{S}_\alpha(x)_n=\varrho^{(q)}(x_n,\alpha |x_n|^{q-1}).
\end{equation*}
This modified scheme is known to converge for $q=0$ and $q=1$, cf.~\cite{Blumensath1,daub_inverse_defrise}. It is thus reasonable to believe that it also converges for $0<q<1$, which is supported by numerical experiments.

%Due to the noncontinuity of the shrinkage rules $\varrho^{(q)}$, for $0<p<1$, the application of Banach's Fix Point Theorem as in \cite{daub_inverse_defrise} (under certain assumptions on the spectrum of $\mathcal{T}$) does not imply that the scheme converges towards a fix point $g^\infty = S_\alpha(g^\infty+\mathcal{T}^*f-\mathcal{T}^*\mathcal{T}g^\infty)$, but it still ensures convergence. Since the shrinkage rule $\varrho^{(q)}$ minimizes up to a constant factor, we can hope for still being reasonably close to a fix point.

%Following in  \cite{Daubechies:2004aa}, a simple application of Banach's Fix Point Theorem (without the actual fix point part) yields that under certain assumptions on the spectrum of $\mathcal{T}$ the above iteration converges towards a $g^\infty$.  however, we cannot claim anymore that $g^\infty$ is a fix point, that is $g^\infty = S_\alpha(g^\infty+\mathcal{T}^*f-\mathcal{T}^*\mathcal{T}g^\infty)$.
% 
% 
 
% 
% g^0 & = 0 \\ 
%u^{j+1} & = S_\alpha(g^j+\mathcal{T}^*f-\mathcal{T}^*\mathcal{T}g^j)$, where $S_\alpha(x)_n=\varrho^{(q)}(x_n,\alpha |x_n|^{q-1})$.

\subsection{Analytic Ultracentrifugation}
Sedimentation velocity analytical ultracentrifugation is a method to determine the size distribution of macromolecules in a solute, cf.~\cite{Brown:2008aa,Schuck:2000aa}. The physical model leads to a Fredholm integral equation 
\begin{equation}\label{eq:Fredholm}
f(y)=(\mathcal{T}g) (y)=\int g(x)K(x,y)dx,
\end{equation}
whose kernel $K$ of the integral operator $\mathcal{T}$ represents the sedimentation profile and is only implicitly given through the solution of the Lamm equation, a differential equation discussed in \cite{Lamm:1929aa}. From the experimentally observed signal $f$, one must deduce the particles' or macromolecules' size distribution $g$. However, this is an ill-posed problem and requires regularization. State of the art regularizations for this problem are Tikhonov and maximum entropy regularization in \cite{Cox:1969aa,HANSEN:1992aa,Schuck:2000aa}. Both methods have also been used in combination with Bayesian priors \cite{Brown:2008aa}. 

Partial information about a solute is often available. We consider the case in which we know a-priori that the solute is well separated into molecules of very different sizes. In other words, the seeked size distribution $g$ is sparse, i.e., has only few peaks and is almost zero elsewhere. The sparser the expected distribution the smaller we may want to chose $q$. However, there is a trade off, because we then only minimize up to a constant factor. It seems reasonable to believe that heuristics can be developed to chose a near optimal $q$ for a given experiment.    

Before we apply the proposed iterative scheme to solve the analytical ultracentrifugation problem, we observe that the size distribution $g$ must be nonnegative. We first discretize \eqref{eq:Fredholm} by sampling on a finite grid. By using the nonnegativity as an additional regularization, we modify the application of the shrinkage process $S_\alpha$ in such a way that negative arguments are not shrinked in its original sense, but simply set to zero. It does not introduce any additional discontinuities, because $\varrho(x,\alpha)\rightarrow 0$ as $0\leq x\rightarrow 0$. This procedure enforces a nonnegative limit.

In our numerical experiments, we consider different values of $0<q<1$ and compare the results in terms of how much sparsity we obtain while only introducing a relatively small residual $\| f-\mathcal{T}g\|$. We finally compare these findings to maximum entropy regularization that was used in \cite{Brown:2008aa,Schuck:2000aa} to solve the ill-posed problem of analytical ultracentrifugation. We aim to verify that our proposed scheme can provide sparser solutions with sharper peaks, higher resolution and smaller residual.

\subsection{Numerical Results}

Maximum entropy regularization 
\begin{equation*}
\arg\min_{g}\big(\| f-\mathcal{T}g\|^2_{\ell_2} + \beta\sum_n g_n \ln(g_n) \big)
\end{equation*}
is the state of the art tool to solve \eqref{eq:Fredholm} for the analytical ultracentrifugation, cf.~\cite{Brown:2008aa,Schuck:2000aa}. It has been implemented in the softwaretool SEDFIT \cite{Schuck:2000aa}, that we use as a reference. SEDFIT uses f-statistics to choose $\beta$.

\noindent\textbf{Sharper spikes, fewer nonzero entries, and higher resolution:}\\
Our scheme is applied to a highly pure IgG antibody solute. Due to the purity, the ``correct'' solution to the underlying Fredholm integral equation must be highly sparse. We use $100$ measurements on an equidistant grid to solve the discrete analogue of the integral equation \eqref{eq:Fredholm}. The residual $\| f-\mathcal{T}g_S\|_{\ell_2}$ of the SEDFIT solution $g_S$ is $0.7878$. Although the solution seems sparse, cf.~the red graph in Figure \ref{figure:lg100}, it has many small entries and the spikes are relatively wide.  

 The choices $0<q<1$ promote sparsity and, for sufficiently small $\alpha>0$, our proposed scheme leads to smaller residuals, sharper spikes, and few small entries, cf.~Figure \ref{figure:lg100}. The SEDFIT solution is nonzero between $42$ and $90$. The solution to our proposed scheme has peaks at $45$, $55$, $66$, and $89$, and vanishes in between. To verify that these peaks reflect the antibody solute (i.e., the peaks are real), we compute the maximum entropy solution for $1000$ measurements on an equidistant grid, cf.~Figure \ref{figure:IgG1000}. This SEDFIT solution at this higher resolution has peaks around $45$, $55$, $66$, and $89$. Thanks to the sparsity promoting feature of our scheme, we ``see'' these peaks already with the much broader resolution of only $100$ measurements. We observe that starting the iteration with the SEDFIT solution rather than the zero vector in \eqref{eq:zero starting vector} still leads to the same solution which indicates an intrinsic stability of the proposed scheme. 
 
These results suggest that our proposed scheme has great potential when samples can be assumed to be highly pure. The method then leads to sharper spikes, fewer nonzero entries, and higher resolution.

%\begin{figure}
%\centering
%%\subfigure[]{
%%\includegraphics[width=0.8\textwidth]{lsqnonneg.eps}}
%%\subfigure[]{
%%\includegraphics[width=0.8\textwidth]{newScheme.eps}}
%\includegraphics[width=0.7\textwidth]{ultracentrifugation/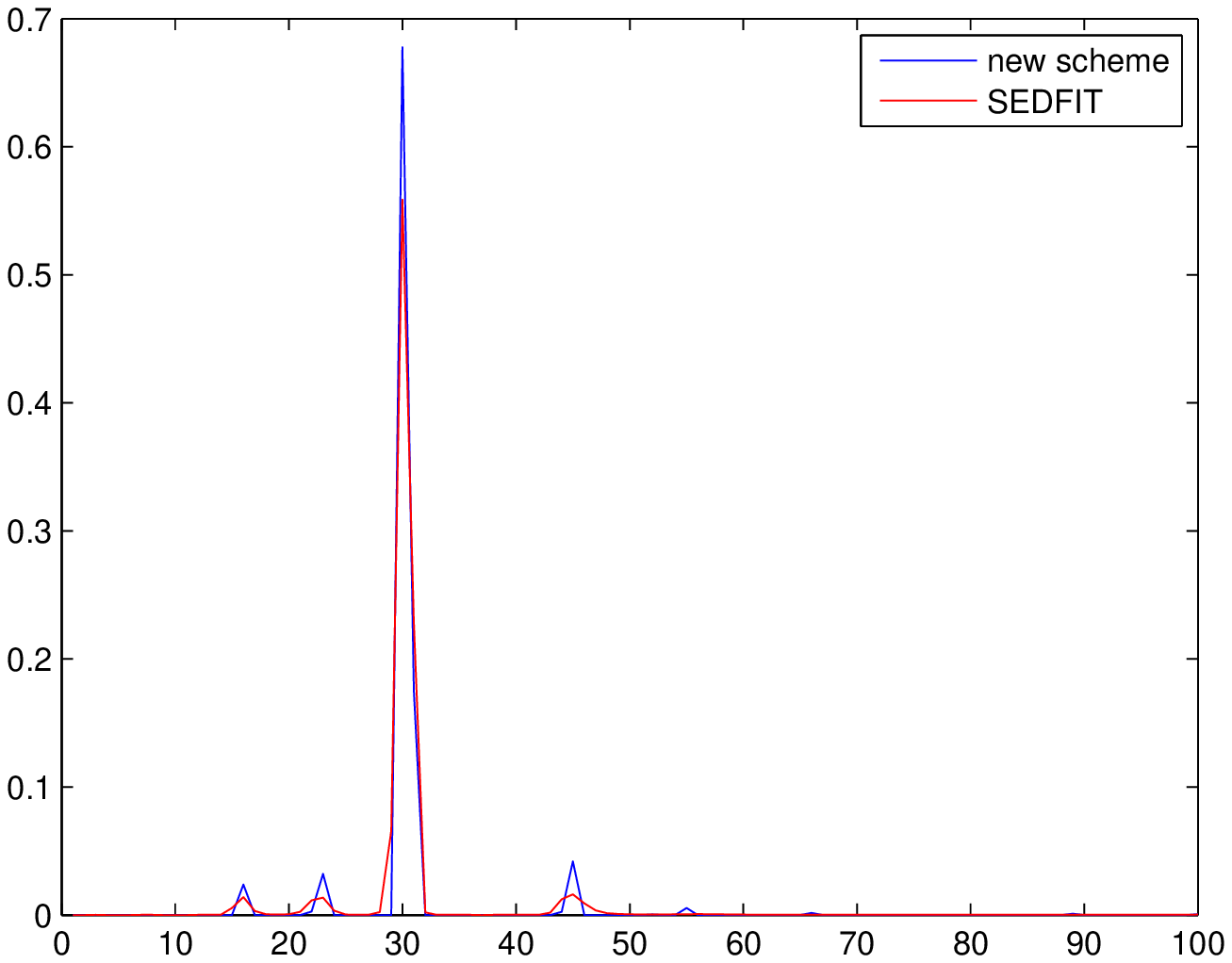}

%%\includegraphics[width=0.49\textwidth]{ultracentrifugation/newScheme_Ig_zoom_16_23.eps}
%\includegraphics[width=0.49\textwidth]{ultracentrifugation/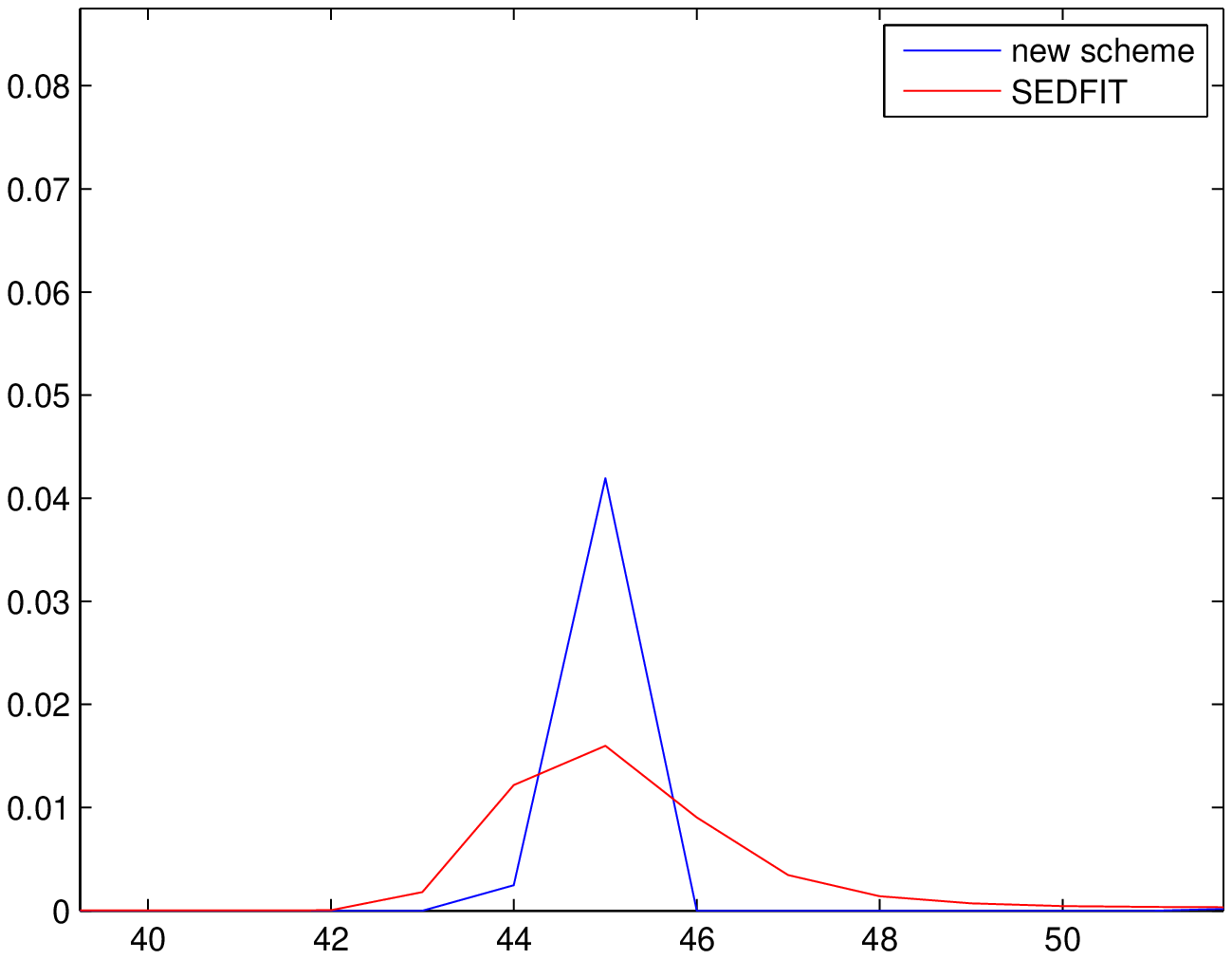}
%\includegraphics[width=0.49\textwidth]{ultracentrifugation/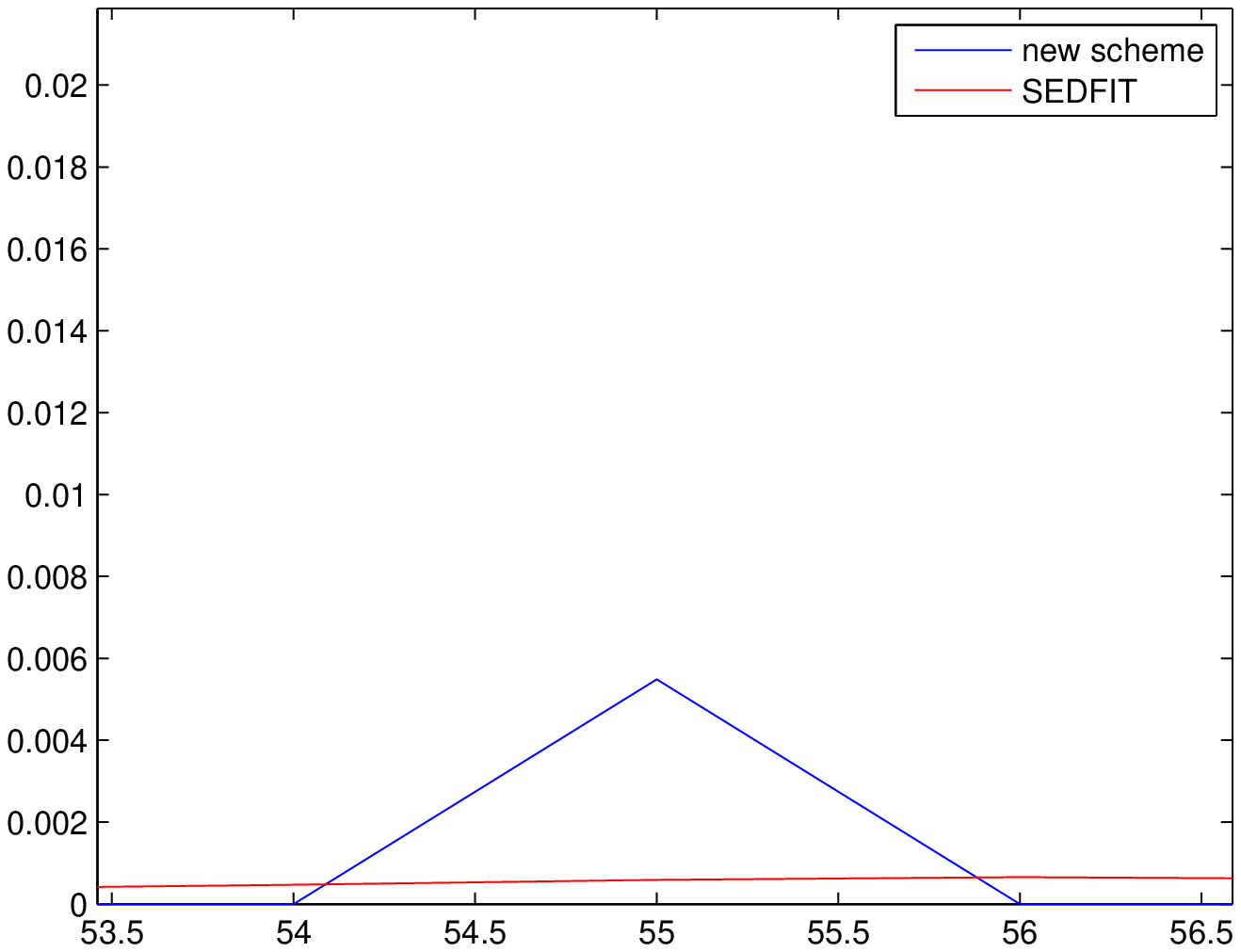}

%\includegraphics[width=0.49\textwidth]{ultracentrifugation/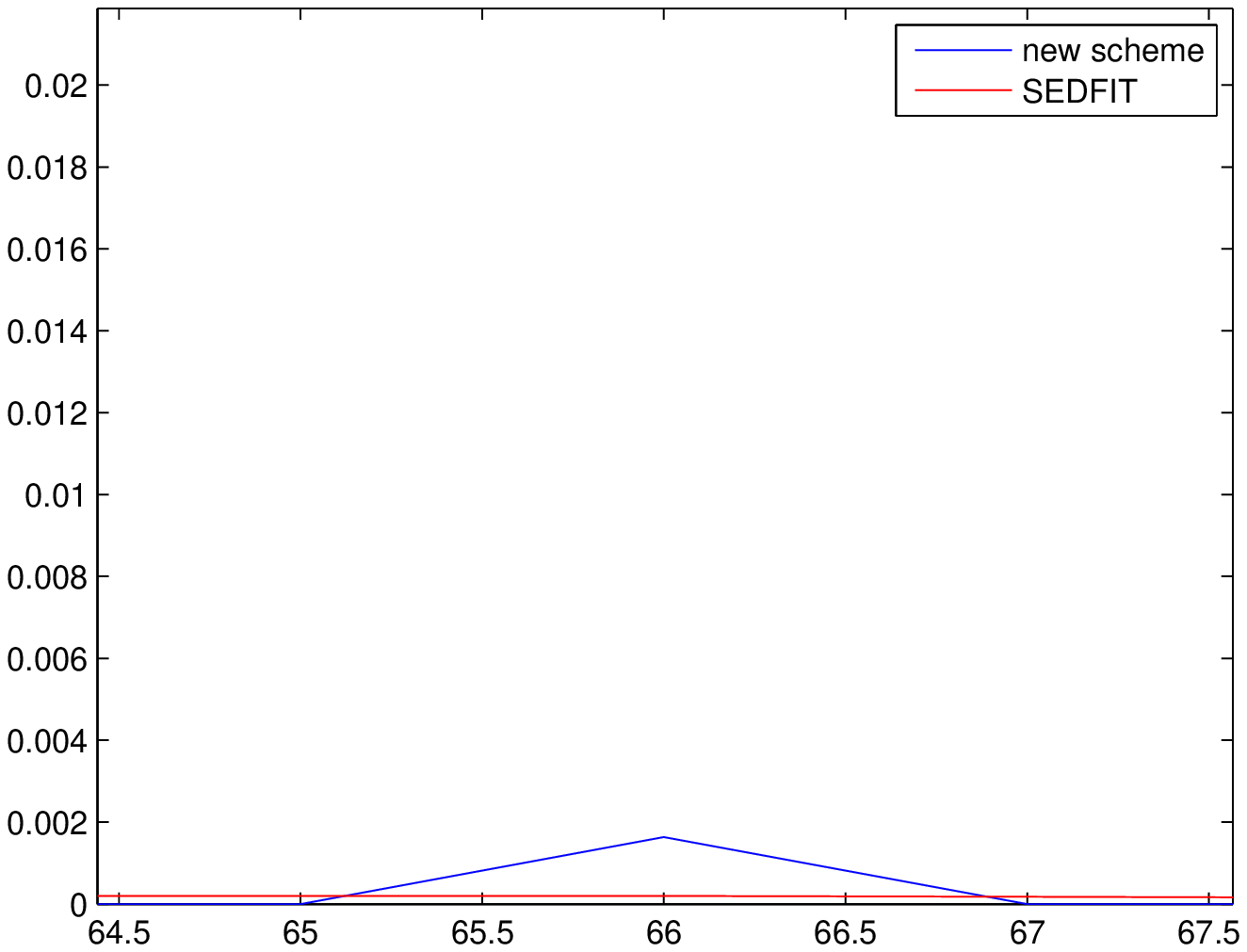}
%\includegraphics[width=0.49\textwidth]{ultracentrifugation/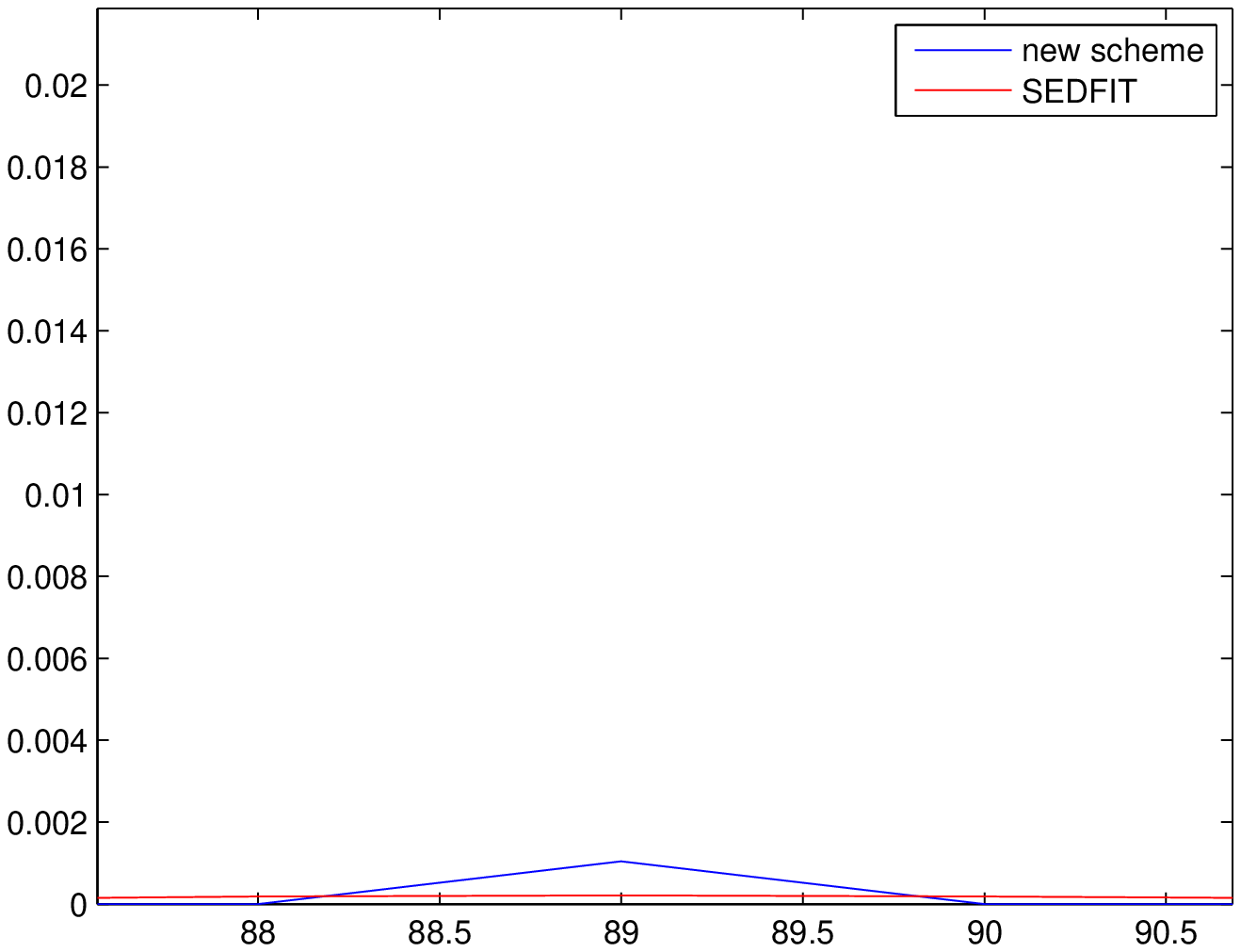}

%\caption{Comparison between SEDFIT and our proposed scheme with $q = 0.3$: we obtain sharper peaks, especially around $16$ and $23$. While SEDFIT is nonzero between $42$ and $90$, the solution to our scheme is zero except for peaks at $45$, $55$, $66$, and $89$.}\label{figure:lg100}
%\end{figure} 

\begin{figure}[]


\centering
%\subfigure[]{
%\includegraphics[width=0.8\textwidth]{lsqnonneg.eps}}
%\subfigure[]{
%\includegraphics[width=0.8\textwidth]{newScheme.eps}}
\includegraphics[width=0.67\textwidth]{newScheme_Ig.eps}
\includegraphics[width=0.32\textwidth]{newScheme_Ig_zoom_45.eps}

\includegraphics[width=0.32\textwidth]{newScheme_Ig_zoom_55.eps}
\includegraphics[width=0.32\textwidth]{newScheme_Ig_zoom_66.eps}
\includegraphics[width=0.32\textwidth]{newScheme_Ig_zoom_89.eps}

\caption{Comparison between SEDFIT and our proposed scheme with $q = 0.3$: we obtain sharper peaks, especially around $16$ and $23$. While SEDFIT is nonzero between $42$ and $90$, the solution to our scheme is zero except for peaks at $45$, $55$, $66$, and $89$.}\label{figure:lg100}
\end{figure} 
\begin{figure}[]
\centering
%\subfigure[]{
%\includegraphics[width=0.8\textwidth]{lsqnonneg.eps}}
%\subfigure[]{
%\includegraphics[width=0.8\textwidth]{newScheme.eps}}
\includegraphics[width=0.32\textwidth]{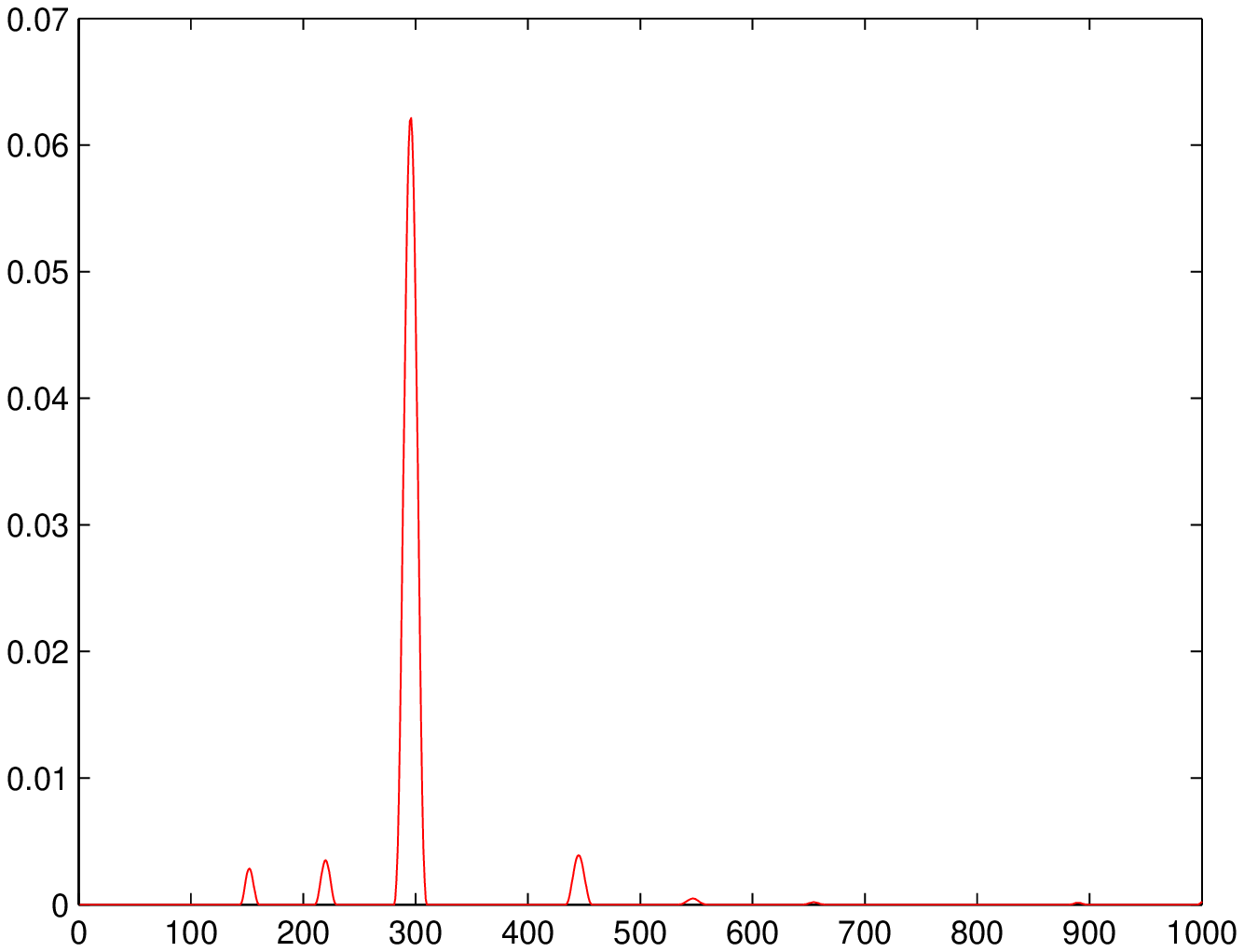}
\includegraphics[width=0.32\textwidth]{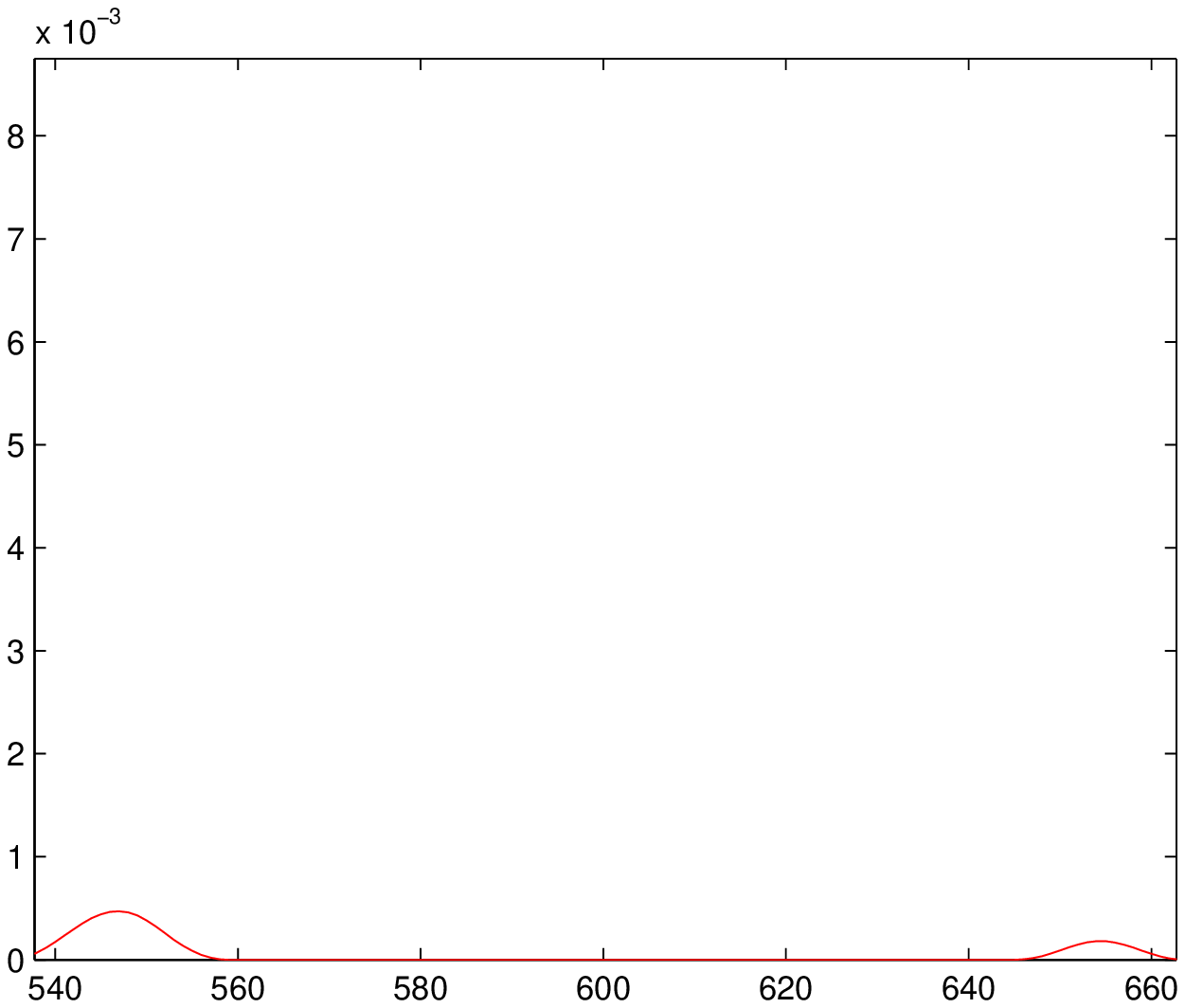}
\includegraphics[width=0.32\textwidth]{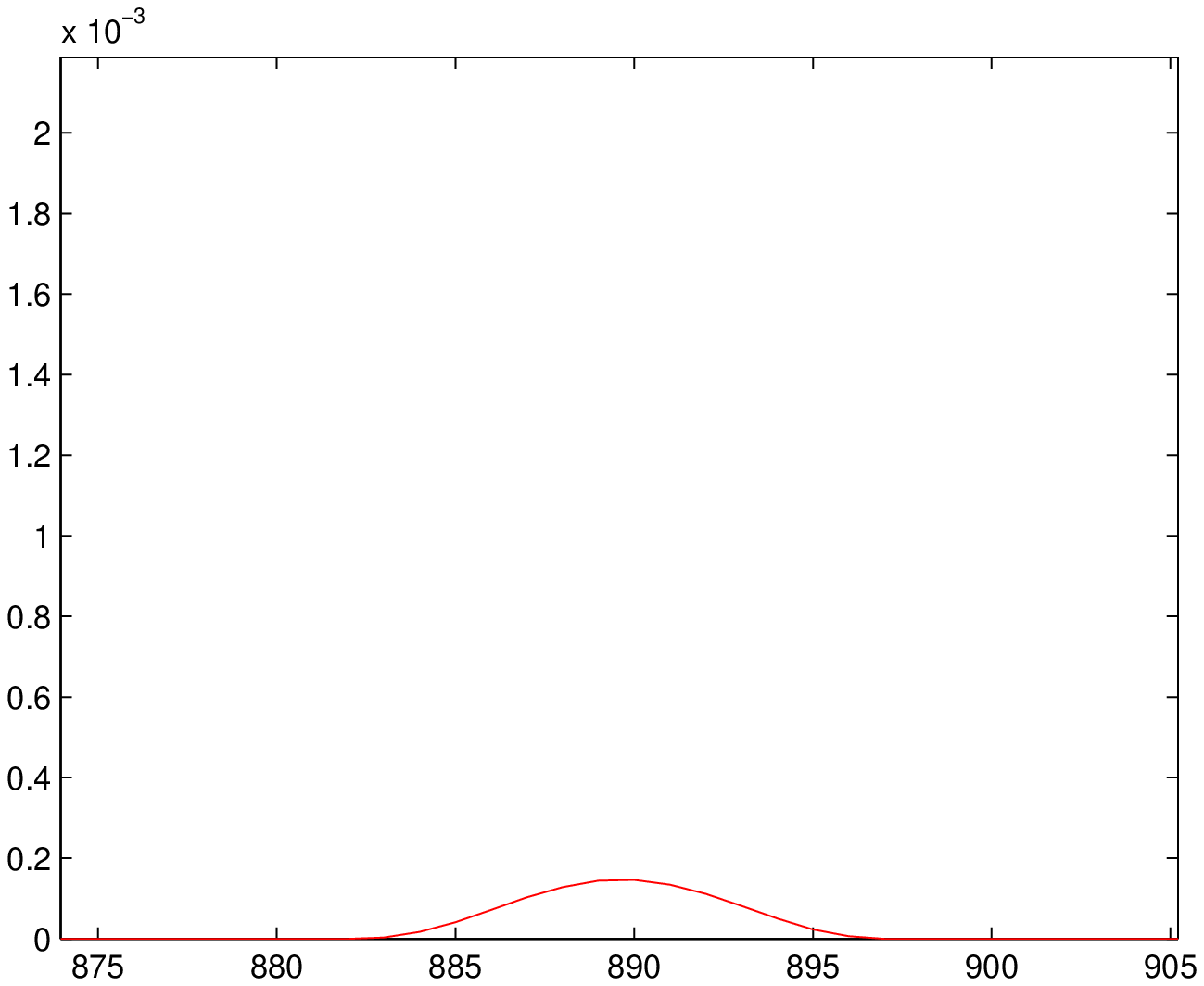}

\caption{SEDFIT solution (maximum entropy regularized) for $1000$ data points. There are peaks at $550$, $660$, and $890$ (to compare with $100$ data points, x-axes needs to be divided by $10$ and y-axes needs to be multiplied by $10$). Thus the peaks of our proposed method at $55$, $66$, and $89$ are real and have the correct amplitude. They therefore reflect the antibody solute while using only a tenth of the data.}\label{figure:IgG1000}
\end{figure} 

%\begin{figure}
%\centering
%\includegraphics[width=0.7\textwidth]{ultracentrifugation/newScheme_1000.eps}
%\caption{New Scheme for IgG1000}
%\end{figure}

%\begin{figure}
%\centering
%\includegraphics[width=0.7\textwidth]{ultracentrifugation/newScheme_IgG600.eps}
%\caption{newSchemeIgG600}
%\end{figure}

%Our scheme is applied to a second solute of BSA antibody monomers. The 
%\begin{figure}
%\centering
%%\subfigure[]{
%%\includegraphics[width=0.8\textwidth]{lsqnonneg.eps}}
%%\subfigure[]{
%%\includegraphics[width=0.8\textwidth]{newScheme.eps}}
%\includegraphics[width=0.7\textwidth]{ultracentrifugation/newScheme_2.eps}

%\includegraphics[width=0.49\textwidth]{ultracentrifugation/newScheme_2_zoom_57.eps}
%\includegraphics[width=0.49\textwidth]{ultracentrifugation/newScheme_2_zoom_69.eps}

%\includegraphics[width=0.49\textwidth]{ultracentrifugation/newScheme_2_zoom_75.eps}
%\includegraphics[width=0.49\textwidth]{ultracentrifugation/newScheme_2_zoom_80.eps}

%\caption{Comparison between SEDFIT and our proposed scheme with $q=0.8$: we obtain sharper peaks, especially around $30$ and $44$. While SEDFIT is nonzero between $57$ and $80$, the solution to our scheme is zero except for three peaks at $57$, $69$, and $80$.}\label{figure:100}
%\end{figure} 

\section{Conclusion}\label{section:conclusions}
We have addressed variational problems with $\ell_q$-constraints for $q\in(0,1)$. In case that computation time is crucial as it is in any real-time and on-line application, there are no sufficiently fast algorithms to solve them. By considering minimization up to a constant factor, we have overcome this limitation. We avoid costly iterative schemes and derive closed formulas for such minimizers. This approach provides a tool which makes problems for $q<1$ more feasible than until now. If exact solutions are required, those minimizers can initialize iterative schemes to speed up their convergence and to find an accurate local minimum.  %Moreover, the iterative shrinkage scheme for $q\in[1,2]$ in \cite{daub_inverse_defrise} and for $q=0$ in \cite{Blumensath1} could be modified by applying $\varrho^{(q)}_{h,s}(v_n,\alpha_n|v_n|^{q-1})$ to cover $q\in(0,1)$ as well.
%Finally, we have found a new family of shrinkage rules, which is adapted to $q\in(0,1)$. 

We have then modified the Landweber iteration with shrinkage applied at each iteration step in \cite{daub_inverse_defrise} by replacing the shrinkage rule with $\varrho^{(q)}_{h,s}(v_n,\alpha_n|v_n|^{q-1})$ to cover $q\in(0,1)$ as well. The proposed scheme has been used to solve the ill-posed problem of analytic ultracentrifugation. The results have been compared to the standard regularization for the analytical ultracentrifugation introduced in \cite{Brown:2008aa,Schuck:2000aa}. We have verified that our proposed scheme can provide sparser solutions with sharper peaks, higher resolution and smaller residual. Thus, the scheme provides a useful add-on to the standard maximum entropy regularization. 

It is known though that iterative schemes of the type presented in Section \eqref{subsection:landweber} converge relatively slowly and thus the computation time of our scheme is orders higher than those in \cite{Brown:2008aa,Schuck:2000aa}. To present a competitive approach that can be incorporated into online applications such as the software-package SEDFIT/SEDPHAT \cite{Schuck:2000aa}, the method still needs a major tune up to derive a faster convergence, cf.~ \cite{Daubechies:2008aa} for possible directions.

%We have addressed bounded operators, but our proofs cover nonlinear Lipschitz continuous $L$ and $L^\#$ as well.   

For further theoretical foundation, it remains to find general conditions on $L$ and on the bi-frame such that $\widetilde{F}^*L^\#LF$ is bounded on $\ell_q^{(\alpha_n)}$ and to compute the difference between $\varrho^{(q)}_{h,s}$ and the exact minimizer of \eqref{problem equiv}. It also remains to precisely determine the arising constants. For the analytical ultracentrifugation, the brute force discretization by means of sampling must still be replaced with a proper discretization scheme involving suitable ansatz functions and smoothness spaces. It also remains to verify that the Landweber iteration with $q$-dependent shrinkage converges towards a minimizer up to a certain error. We plan to address these topics in a forthcoming paper.

\providecommand{\bysame}{\leavevmode\hbox to3em{\hrulefill}\thinspace}
\providecommand{\MR}{\relax\ifhmode\unskip\space\fi MR }
% \MRhref is called by the amsart/book/proc definition of \MR.
\providecommand{\MRhref}[2]{%
  \href{http://www.ams.org/mathscinet-getitem?mr=#1}{#2}
}
\providecommand{\href}[2]{#2}

\end{document}